\newcommand{\hyperface}{\mathcal{H}}
\newcommand{\flux}{\bm{\Phi}}
\journal{International Journal for Numerical Methods in Fluids
}
\begin{document}

\title{Edge-based discretizations on triangulations in $\mathbb{R}^d$,  with special attention to four-dimensional space.}

\author[1]{Nicholas Tufillaro}

\author[1]{David M. Williams}

\author[2]{Hiroaki Nishikawa}

\authormark{TUFILLARO \textsc{et al.}}
\titlemark{Edge-based discretizations on triangulations in $\mathbb{R}^d$, with special attention to four-dimensional space}

\address[1]{\orgdiv{Mechanical Engineering}, \orgname{Pennsylvania State University}, \orgaddress{\state{Pennsylvania}, \country{United States}}}

\address[2]{\orgname{National Institute of Aerospace}, \orgaddress{\state{Virginia}, \country{United States}}}

% \address[3]{\orgdiv{Department Name}, \orgname{Institution Name}, \orgaddress{\state{State Name}, \country{Country Name}}}

\corres{Corresponding author David M. Williams, \email{david.m.williams@psu.edu}}

\presentaddress{136 Research East, Pennsylvania State University, University Park, Pennsylvania, 16802}

%\fundingInfo{Text}
%\JELinfo{ejlje}

\abstract[Abstract]{Many time-dependent problems in the field of computational fluid dynamics can be solved using \emph{space-time} methods. However, such methods can encounter issues with computational cost and robustness. In order to address these issues, efficient, node-centered edge-based schemes are currently being developed. In these schemes, a median-dual tessellation of the space-time domain is constructed based on an initial triangulation. These methods are \emph{node-centered} or \emph{node-based}, as the primary components of the discretization are median-dual regions (polytopes) which surround the mesh nodes. These methods are extremely robust, as the median-dual regions will often maintain a positive volume and other good geometric properties, even in cases when some of the associated simplices have negative volumes, or other poor geometric properties. Unfortunately, it is not straightforward to construct median-dual regions or deduce their properties on triangulations for $d \geq 3$. In this work, we  provide the first rigorous definition of median-dual regions on triangulations in any number of dimensions. In addition, we introduce a new method for computing the hypervolume of a median-dual region in $\mathbb{R}^d$. Furthermore, we provide a new approach for computing the directed-hyperarea vectors for faces of a median-dual region in $\mathbb{R}^{4}$. These geometric properties are key for developing node-centered edge-based schemes in any number of dimensions.  We conclude our work by analyzing the computational complexity of the edge-based schemes, and performing numerical experiments in two, three, and four dimensions. We successfully demonstrate their effectiveness by obtaining accurate solutions to several canonical problems.}

\keywords{Edge-Based discretization, Space-time, Median dual, Four dimensions}

\jnlcitation{\cname{%
\author{Tufillaro N},
\author{Williams D M}, and
\author{Nishikawa H}}.
\ctitle{Edge-based discretizations on triangulations in $\mathbb{R}^d$, with special attention to four-dimensional space} \cjournal{\it Int. J. Num. Meth. Flu.} \cvol{2025;00(00):1--27}.}

\maketitle

\renewcommand\thefootnote{}
\footnotetext{\textbf{Abbreviations:} CFD, computational fluid dynamics; MDR, median-dual region; RK, Runge-Kutta; CFK, Coxeter-Freudenthal-Kuhn.}

\renewcommand\thefootnote{\fnsymbol{footnote}}
\setcounter{footnote}{1}

\section{Introduction}

Node-centered methods and cell-centered methods have become popular tools for numerically solving partial differential equations. In a node-centered method, the unknowns (degrees of freedom) are the vertices of a triangulation, and in a cell-centered method, the unknowns are the cell averages. Here, a triangulation is defined as a mesh composed of $d$-simplicial elements which covers a domain $\Omega \subset \mathbb{R}^d$. We will refer to the triangulation as the \emph{primitive} or \emph{primary} tessellation of the domain. Evidently, the primitive tessellation is naturally associated with cell-centered methods. In contrast, the \emph{dual} tessellation is a mesh composed of $d$-polytopes which covers the domain, where the polytopes contain the triangulation vertices (nodes) as their `centers'. Evidently, the dual tessellation is associated with node-centered methods. The dual regions---often called dual control volumes---provide a convenient way to enforce conservation of mass, momentum, or energy in the immediate vicinity of the nodes. The idea of conservation is important in the field of computational fluid dynamics (CFD), as conservation errors can result in incorrect predictions of shockwave speeds and thermodynamic properties~\cite{part1994conservative,abgrall2001computations,johnsen2012preventing,lv2014discontinuous,peyvan2022oscillation,gaburro2024discontinuous}. Of course, node-centered numerical methods are not the only methods which enforce conservation; evidently, cell-centered methods accomplish the same task. However, node-centered methods have become increasingly popular due to their exceptional efficiency. For example,  \emph{node-centered} \emph{edge-based} methods have become the foundation of many important CFD codes~\cite{barth_AIAA1991,fun3d_manual:NASA20240006306,nakashima_watanabe_nishikawa:Japan2014,dlr-tau-digital-x,mavriplis_long:AIAA2010,Luo_Baum_Lohner:AIAA2004-1103,KozubskayaAbalakinDervieux:AIAA2009,Haselbacher_Blazek_AIAAJ2000,sierra-primo:AIAA2002,Eliasson_EDGE:2001,fezoui_stoufflet:JCP1989,GaoHabashiFossatiIsolaBaruzzi_AIAA2017-0085}. These methods are remarkably inexpensive, as they compute the residual using a compact stencil that only involves the solution and gradients at the current node and its edge-neighbors. Conservation is enforced via careful calculations of numerical fluxes at the dual faces which intersect the edges. In addition, fluxes through dual faces over the boundaries are added with special accuracy-preserving formulas \cite{nishikawa:AIAA2010,nishikawa_boundary_quadrature:JCP2015}; furthermore, boundary conditions are enforced through an upwind numerical flux with the external state defined by a physical boundary condition~\cite{liu_nishikawa_aiaa2016-3969}. Generally speaking, node-centered, edge-based methods are quite fast, as they only require a single loop over edges in order to compute the residuals, and a single loop over boundary elements to close the residuals at boundary nodes and enforce boundary conditions. Most importantly, node-centered, edge-based methods are capable of achieving second-order or third-order accuracy on unstructured triangulations in 2D~\cite{Katz_Sankaran_JCP:2011,katz_sankaran:JSC_DOI,diskin_thomas:AIAA2012-0609} and 3D~\cite{liu_nishikawa_aiaa2016-3969,liu_nishikawa_aiaa2017-0081}. We note that third-order accuracy can be achieved with a single numerical flux per edge, without requiring quadratic (curved) triangulations~\cite{liu_nishikawa_aiaa2016-3969,nishikawa_boundary_quadrature:JCP2015}, or second-derivative information~\cite{nishikawa_liu_source_quadrature:jcp2017}.
%, or an expanded stencil~\cite{Nishikawa_aiaa_scitech2024}.
%although the gradient stencil may need to be expanded, for the sake of robustness, by one edge-neighbor in this case (or one can employ an implicit edge-based gradient method \cite{Nishikawa_aiaa_scitech2024} to avoid expanding the stencil). 

In principle, a node-centered, edge-based scheme can be employed on \emph{any} dual tessellation, subject to the following geometric constraints: 
\begin{enumerate}[(a)]
    \item The dual cell for a given node must contain the node.
    \item  The dual cell must be contained within the union of all $d$-simplices which share the node.
    \item  The dual cell must be simply connected.
    \item The aggregate hypervolumes of the dual tessellation and the original triangulation must be identical. 
\end{enumerate}
The first two constraints help ensure proximity of the dual cell and the associated node, 
%(the dual cell is not separated from the node by some, uncontrolled distance) 
and sizing (scale) of the dual cell relative to the initial triangulation.
%(the size of the dual region is directly controlled by the original mesh spacing of the triangulation). 
 These constraints can be viewed as \emph{locality} constraints, which require the immediate region of influence of a node to coincide with its immediate geometric neighborhood. The locality constraints are generally desired, if not strictly required, because violation of these constraints may imply the existence of inverted elements. Failing to satisfy these constraints is undesirable, but not necessarily fatal, as there is evidence that the edge-based methods work just fine even if some elements have zero or negative volumes~\cite{nishikawa_aiaa2017-4295}.

 The last two constraints help enforce conservation. While the locality constraints were somewhat flexible, the conservation constraints are non-negotiable, due to the importance of maintaining the correct quantities of mass, momentum, and energy.
%Evidently, if the dual cells are meant to enforce conservation in the immediate vicinity of a given node (and globally over the entire domain), they should satisfy the above constraints.

Of course, most well-known dual tessellations satisfy the constraints above. However, \emph{median-dual} tessellations are the most popular. First of all, this is because the existing node-centered edge-based method is second- or third-order accurate only with median-dual tessellations on simplex-element grids, and first-order accurate otherwise (see, e.g.~\cite{Nishikawa_aiaa2020-1786}). More generally, there are important reasons based on geometric considerations. These considerations are discussed in more detail below.

%In order to explain the reasoning behind this fact, we will provide a brief review of dual tessellations in what follows. 

\subsection{Geometric Background}

A median-dual tessellation of a triangulation in $\mathbb{R}^{d}$ is a set of $d$-polytopal cells which covers the domain of triangulation, where the vertices of the dual regions are the centroids of 2-simplices, 3-simplices, ... , and $d$-simplices which belong to the triangulation, and share a given node. 
The resulting regions are simply connected, possess straight edges, always contain the central node of interest, and are (often) non-convex. By construction, the regions possess the same aggregate hypervolume as the original triangulation. It is important to note that median-dual regions are not (usually) identical to \emph{centroid-dual} regions, see~\cite{barth1992aspects}. For example, in 2D, centroid-dual regions are formed by connecting the centroids of adjacent 2-simplices (triangles) which share a given node. A line segment connecting the centroids of adjacent 2-simplices rarely crosses the shared edge at the midpoint; therefore, the face intersection points for the median-dual and centroid-dual regions are often different. %Figure~\ref{fig:median_centroid_compare} highlights the differences between a median-dual region and a centroid-dual region in 2D. 
Unfortunately, centroid-dual regions violate one of our geometric constraints: namely, they can fail to contain the central node of interest. Figure~\ref{fig:centroidproblem} shows an example of this issue. 
\begin{figure}[h!]
    \centering
    \includegraphics[width = 0.5\textwidth]{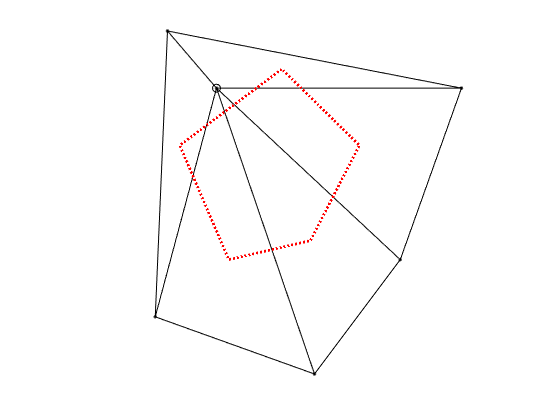}
    \caption{An example of the centroid-dual region not containing the associated node in 2D. The associated node is denoted by an open circle, and the region is denoted with a dotted-red line.} %Adjacent nodes are denoted with small solid circles to avoid confusion.}
    \label{fig:centroidproblem}
\end{figure}

With the above discussion in mind, it makes sense to consider developing dual tessellations based on non-centroidal quantities, such as incenters, hypercircumcenters, or orthocenters. Each of these geometric quantities generalizes to higher dimensions, $d \geq 3$. 
%In particular, the incenter of a $d$-simplex is the center of the largest hypersphere that can be contained in the simplex. In addition, the  hypercircumcenter of a $d$-simplex is the center of the unique hypersphere which passes through all of the $(d+1)$ vertices of the simplex. Lastly, the orthocenter of a $d$-simplex is the intersection of the altitudes of the simplex. 
Unfortunately, some of these alternative options violate our geometric constraints. For example, the hypercircumcenters and orthocenters of $d$-simplices are not guaranteed to reside within the interiors of the simplices. Even in 2D, the circumcenter and orthocenter of an obtuse 2-simplex lie outside of the simplex~\cite{coxeter1967geometry}. Fortunately, by construction, the incenter lies inside of the $d$-simplex. 
However, the computation of the incenter for a $d$-simplex often requires the solution of a linear system of equations, (see for example~\cite{klein2020insphere}). It is unclear that one gains any advantage from this additional complexity. %Figure~\ref{fig:median_incenter_compare} highlights the differences between a median-dual region and an incenter-dual region in 2D. 

Our discussion of dual tessellations would be incomplete without mentioning \emph{Voronoi} and \emph{Laguerre} tessellations. The Voronoi tessellation (or Voronoi diagram)~\cite{voronoi1908nouvellesA,voronoi1908nouvellesB}, is a space-efficient subdivision of a domain, in which each dual cell contains all the points in the domain which are closer to the central node of the cell than they are to any other node in the triangulation. The Voronoi dual cells are convex, simply connected, and possess straight edges.  It is well-known that Voronoi tessellations are the duals of Delaunay triangulations---assuming all the sites of the Voronoi cells are in general position~\cite{boissonnat2018geometric}. 
%The vertices of  Voronoi cells are the hypercircumcenters of Delaunay $d$-simplices. 
In a similar fashion, a Laguerre tessellation (or Power diagram)~\cite{blaschke1929vorlesungen,aurenhammer1987power} is the dual of a weighted-Delaunay triangulation~\cite{boissonnat2018geometric}. The nodes of a weighted-Delaunay triangulation can be treated like hyperspheres with different radii that depend on the weights; in turn, the Laguerre dual cell for a given hypersphere contains all the points which are closer to the central hypersphere than they are to any other hypersphere in the weighted triangulation. 
%In this case, \emph{closeness} is measured via the power distance. 
Unfortunately, most algorithms which construct Voronoi or Laguerre tessellations must start with a Delaunay or weighted-Delaunay triangulation~\cite{bowyer1981computing,watson1981computing}, (with the exception of Fortune's algorithm~\cite{fortune1986sweepline}). This inhibits the use of these tessellations for non-Delaunay triangulations. 
%However, one may still construct both median-dual and Voronoi tessellations on a given Delaunay triangulation. %Figure~\ref{fig:median_voronoi_compare} highlights the differences between a median-dual region and a Voronoi region on a 2D Delaunay triangulation.

For the sake of completeness, Figures~\ref{fig:median_compare} and \ref{fig:dual_summary} highlight the differences between median-dual, centroid-dual, incenter-dual, and Voronoi regions on a 2D Delaunay triangulation.
\begin{figure}[h!]
    \centering
    \begin{subfigure}[t]{0.45\textwidth}
        \centering
    \includegraphics[height=2.5in,trim={2cm 0 2cm 0},clip]{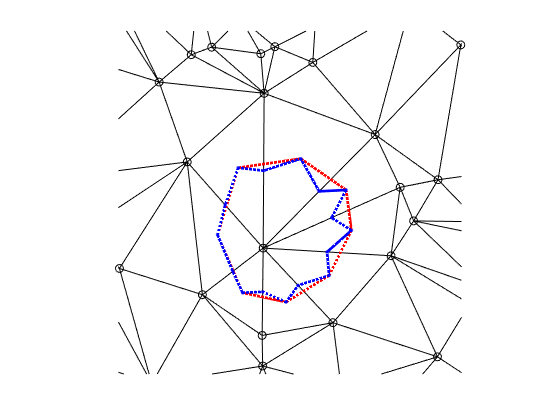}
    \subcaption{Median-dual and centroid-dual.}
    \label{fig:median_centroid_compare}
     \end{subfigure}
     \begin{subfigure}[t]{0.45\textwidth}
        \centering
    \includegraphics[height=2.5in,trim={2cm 0 2cm 0},clip]{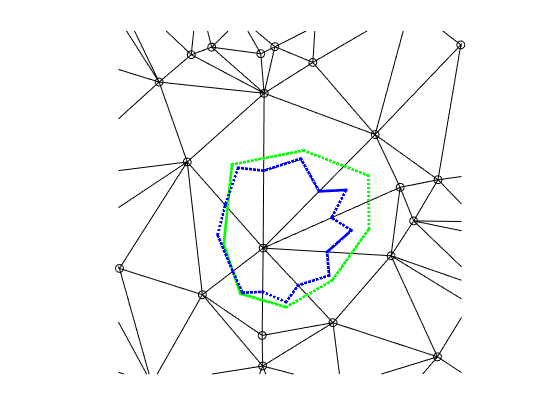}
    \subcaption{Median-dual and incenter-dual.}
    \label{fig:median_incenter_compare}
     \end{subfigure} 
     \begin{subfigure}[t]{0.45\textwidth}
        \centering
    \includegraphics[height=2.5in,trim={2cm 0 2cm 0},clip]{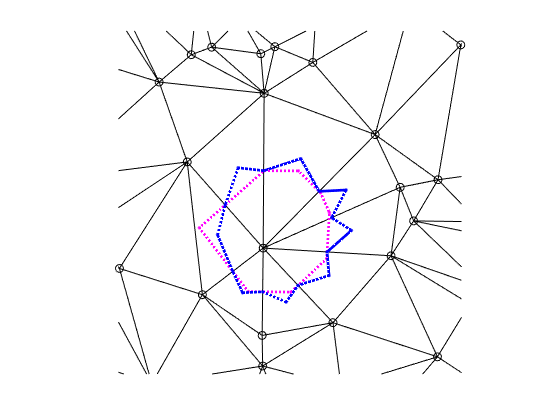}
    \subcaption{Median-dual and Voronoi.}
    \label{fig:median_voronoi_compare}
    \end{subfigure}
    \caption{An illustration comparing the median-dual region (dotted-blue line) to the centroid-dual region (dotted-red line, subfigure a), incenter-dual region (dotted-green line, subfigure b), and Voronoi region (dotted-magenta line, subfigure c) for a generic node of a Delaunay triangulation in~2D.}
    \label{fig:median_compare}
\end{figure}
\begin{figure}[h!]
    \centering
    \begin{subfigure}[t]{0.45\textwidth}
        \centering
    \includegraphics[height=2.5in,trim={2cm 0 2cm 0},clip]{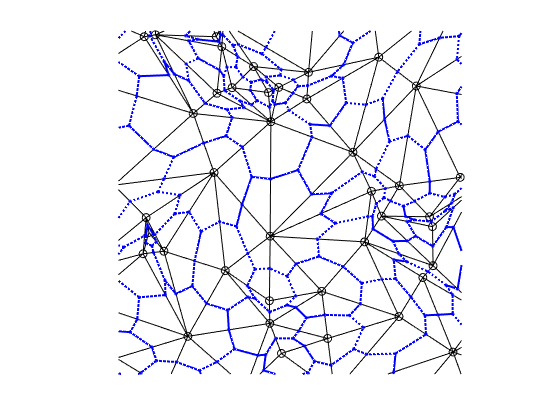}
    \subcaption{Median-dual regions.}
     \end{subfigure}
     \begin{subfigure}[t]{0.45\textwidth}
        \centering
    \includegraphics[height=2.5in,trim={2cm 0 2cm 0},clip]{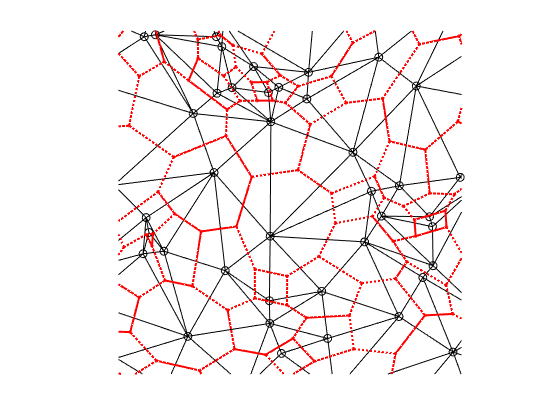}
    \subcaption{Centroid-dual regions.}
     \end{subfigure} 
     \begin{subfigure}[t]{0.45\textwidth}
        \centering
    \includegraphics[height=2.5in,trim={2cm 0 2cm 0},clip]{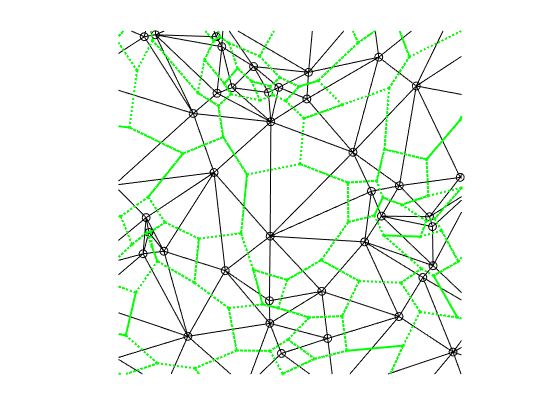}
    \subcaption{Incenter-dual regions.}
    \end{subfigure}
    \begin{subfigure}[t]{0.45\textwidth}
        \centering
    \includegraphics[height=2.5in,trim={2cm 0 2cm 0},clip]{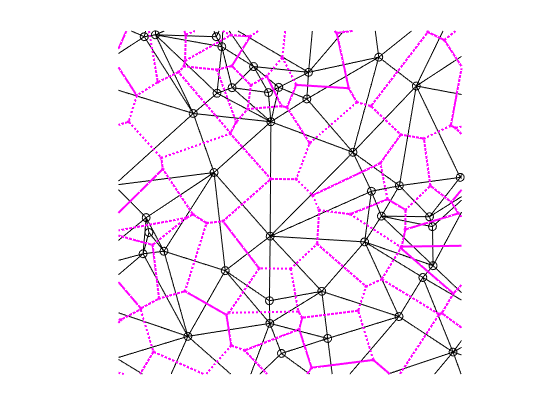}
    \subcaption{Voronoi regions.}
    \end{subfigure}
    \caption{An illustration showing the median-dual regions (dotted-blue lines), centroid-dual regions (dotted-red lines), incenter-dual regions (dotted-green lines), and Voronoi regions (dotted-magenta lines) for multiple nodes, on a Delaunay triangulation in 2D.}
    \label{fig:dual_summary}
\end{figure}

In summary, the key advantages of a median-dual tessellation are flexibility and simplicity. In particular, a median-dual tessellation can be formed based on \emph{any} valid triangulation of a domain in $\mathbb{R}^{d}$, without requiring the solutions of linear systems. This does not generally hold true for centroid-dual, incenter-dual, Voronoi, or Laguerre tessellations, or variants thereof. We note that flexibility is especially important, as there are many alternative strategies for building triangulations in $\mathbb{R}^{d}$, including the advancing-front approach~\cite{lohner1988generation,george1994advancing,moller1995advancing,chan1997automatic,lo2013dynamic}, and the hybrid Delaunay/advancing-front approach~\cite{mavriplis1995advancing,frey19983d}. In addition, even if the original triangulation of interest satisfies the Delaunay criterion, it is often necessary to adaptively refine the triangulation in order to resolve important features of the solution~\cite{neumuller2011refinement,caplan2019four,caplan2020four,belda2023conformal}. Generally speaking, the Delaunay criterion is not satisfied after this adaptive process is complete. As a result, we require dual tessellation strategies that are capable of operating on generic triangulations which emerge from such processes. 
%In light of this requirement, the choice of a median-dual tessellation is a natural one.  

\subsection{Recent Developments}

In previous years, CFD practitioners have often implemented node-centered, edge-based methods by \emph{explicitly} constructing median-dual regions (e.g.~median-dual vertices, edges, faces, and volumes in 2D or 3D). However, Nishikawa~\cite {Nishikawa:ijnmf2024_inreview} recently discovered that this is unnecessary.
%It turns out that node-centered, edge-based methods do \emph{not} require a complete construction of the median-dual regions, and their vertices.
Rather, it is possible to develop a node-centered scheme which only requires certain geometric properties of the median-dual regions: namely, the hypervolumes of the dual regions, as well as the directed-hyperarea vectors associated with the edges~\cite{Nishikawa:ijnmf2024_inreview}. In 3D, the directed-hyperarea vectors are area-weighted vectors which point from the current node towards neighboring nodes. Furthermore, the area weighting is associated with the area of the dual face through which each edge crosses. With these definitions established, it is desirable to identify algebraic formulas for computing the dual hypervolume and directed-hyperarea vectors, which are entirely based on geometric properties of the $d$-simplices which share a given node. This avoids the complex task of explicitly constructing the median-dual regions, and then extracting the necessary geometric quantities. In 2D and 3D, Nishikawa has recently obtained the desired formulas~\cite{Nishikawa:ijnmf2024_inreview}. However, it remains for us to extend these formulas to any number of dimensions $d \geq 2$, and rigorously prove their correctness. 

%This is the main objective of the present paper. 

\subsection{Overview of the Paper}

In this article, our objective is to develop and validate new geometric formulas for node-centered, edge-based schemes. The main contributions of this article can be summarized as follows:
\begin{itemize}
    \item We construct a new, rigorous definition for median-dual regions in $\mathbb{R}^{d}$. Previous definitions were only valid in 2D and 3D.
    \item We develop explicit formulas for the hypervolumes and directed-hyperarea vectors of median-dual regions in $\mathbb{R}^d$. 
    \item For the first time, we prove that our formula for the dual hypervolume holds in any number of dimensions.
    \item For the first time, we prove that our directed-hyperarea vector formula holds in~4D.
    \item  We present the  results of new numerical experiments on the edge-based scheme in 4D. 
\end{itemize}
 
The format of the paper is as follows. In Section~\ref{prelim_section}, we introduce preliminary geometric concepts and define the median-dual region in $\mathbb{R}^d$. In Section~\ref{theoretical_properties_d_section}, we  prove an explicit hypervolume formula in $\mathbb{R}^d$. Next, in Section~\ref{theoretical_properties_four_section}, we  prove an explicit directed-hyperarea formula in $\mathbb{R}^4$, and discuss implementation and verification procedures. In Section~\ref{complexity_section}, we discuss how the edge-based schemes can be applied to linear advection problems, and thereafter perform a complexity analysis. In Section~\ref{results_section}, we present the results of  numerical experiments on linear advection problems. Finally, in Section~\ref{conclusion_section}, we present some concluding remarks and propose future research directions.

\section{Preliminaries} \label{prelim_section}

Consider a set of distinct points in general position
\begin{align}
    \mathcal{P} = \left\{\bm{p}_{1}, \bm{p}_{2}, \ldots, \bm{p}_{\ell}, \ldots, \bm{p}_{L} \right\},
\end{align}
where $L$ is the total number of points, and $1\leq \ell \leq L$. 
% Since the points are distinct, we are guaranteed that the following inequality holds
% %
% \begin{align}
%     \forall \bm{p}_{\ell}, \bm{p}_{m} \in \mathcal{P}: \qquad \left|\bm{p}_{\ell} - \bm{p}_{m} \right| \geq \eta,
% \end{align}
% %
% for some $\eta > 0$, and $1 \leq m \leq L$. 
Now, suppose that we are interested in forming a triangulation of $\mathcal{P}$. Towards this end, let us define a domain $\Omega = \mathrm{conv}(\mathcal{P})$, where $\mathrm{conv}(\cdot)$ denotes the convex hull. A valid triangulation $\mathcal{T}$ of the domain $\Omega$ is one which covers $\Omega$ with non-overlapping $d$-simplicial elements, such that the sum of the hypervolumes of the elements is identical to the hypervolume of $\Omega$ itself. In more technical terms, $\mathcal{T}$ is a \emph{pseudo manifold}, i.e.~a pure simplicial $d$-complex that is $d$-connected,  and for which each $(d-1)$-face has one or two $d$-simplicial neighbors. Such a triangulation can be formed in a straightforward fashion by using one of the techniques previously mentioned in the Introduction.

For the sake of completeness, let us denote $T^{(d)}$ as the generic $d$-simplex that belongs to~$\mathcal{T}$. This simplex is the convex hull of $d+1$ points
\begin{align}
   \nonumber T= T^{(d)} &= \mathrm{conv} \left( \bm{p}_{T,1}, \bm{p}_{T,2}, \ldots, \bm{p}_{T,n} ,\ldots, \bm{p}_{T,d+1} \right), \\[1.0ex]
   &= \mathrm{conv} \left(\bm{p}_{i_1}, \bm{p}_{i_2}, \ldots, \bm{p}_{i_n}, \ldots \bm{p}_{i_{d+1}}\right), \label{simplex_def}
\end{align}
where $1 \leq n \leq (d+1)$. We omit the superscript $(d)$ when our meaning is clear. In Eq.~\eqref{simplex_def}, the quantity $\bm{p}_{T,n}$ is a point labeled with the \emph{local numbering} of simplex $T^{(d)}$, and the quantity $\bm{p}_{i_n}$ is the same point labeled with the \emph{global numbering} of set $\mathcal{P}$.

Once each element $T^{(d)}$ has been constructed, we can define the appropriate \emph{median} quantities, as follows. 

\subsection{Vertex-Based Quantities}
We start by introducing the set of all edges in the mesh that start at point $\bm{p}_{j}$ and end at adjacent points $\bm{p}_{1}$, $\bm{p}_{2}, \ldots, \bm{p}_{k}, \ldots, \bm{p}_{M_1}$
\begin{align}
    \nonumber \mathbb{T}^{(1)}_{j} &\equiv \left\{\bm{p}_{1} - \bm{p}_{j}, \bm{p}_{2} - \bm{p}_{j}, \ldots, \bm{p}_{k} - \bm{p}_{j}, \ldots, \bm{p}_{M_1} - \bm{p}_{j} \right\}  \\[1.0ex]
    &=\left\{T_{j,1}^{(1)}, T_{j,2}^{(1)}, \ldots, T_{j,k}^{(1)}, \ldots T_{j,M_1}^{(1)}  \right\},
\end{align}
where $1\leq k\leq M_1$ and $M_1 = \mathrm{card}\left(\mathbb{T}^{(1)}_{j} \right)$ is the total number of points which are adjacent (edge-wise) to $\bm{p}_{j}$. In addition, we can denote the centroid of each edge as follows
\begin{align}
    \nonumber \mathcal{C}^{(1)}_{j} &\equiv \frac{1}{2} \left\{\bm{p}_{1} + \bm{p}_{j}, \bm{p}_{2} + \bm{p}_{j}, \ldots, \bm{p}_{k} + \bm{p}_{j}, \ldots, \bm{p}_{M_1} + \bm{p}_{j} \right\} \\[1.0ex] 
    &=\left\{\bm{c}_{j,1}^{(1)}, \bm{c}_{j,2}^{(1)}, \ldots, \bm{c}_{j,k}^{(1)}, \ldots, \bm{c}_{j,M_1}^{(1)}  \right\}.
\end{align}
Next, we introduce the set of triangles (2-simplices) that contain the point $\bm{p}_{j}$
\begin{align}
    \mathbb{T}^{(2)}_{j} &\equiv \left\{T_{j,1}^{(2)}, T_{j,2}^{(2)}, \ldots, T_{j,k}^{(2)}, \ldots, T_{j,M_2}^{(2)} \right\},
\end{align}
along with their centroids
\begin{align}
    \mathcal{C}^{(2)}_{j} & \equiv \left\{\bm{c}_{j,1}^{(2)}, \bm{c}_{j,2}^{(2)}, \ldots, \bm{c}_{j,k}^{(2)}, \ldots, \bm{c}_{j,M_2}^{(2)} \right\},
\end{align}
where $1 \leq k \leq M_2$ and $M_2 = \mathrm{card}\left(\mathbb{T}^{(2)}_{j} \right)$ . More generally, we introduce the set of $q$-simplices which share the point $\bm{p}_{j}$
\begin{align}
    \mathbb{T}^{(q)}_{j} &\equiv  \left\{T_{j,1}^{(q)}, T_{j,2}^{(q)}, \ldots, T_{j,k}^{(q)}, \ldots, T_{j,M_{q}}^{(q)} \right\}, 
\end{align}
along with their centroids
\begin{align}
    \mathcal{C}^{(q)}_{j} &\equiv  \left\{\bm{c}_{j,1}^{(q)}, \bm{c}_{j,2}^{(q)}, \ldots, \bm{c}_{j,k}^{(q)}, \ldots, \bm{c}_{j,M_{q}}^{(q)} \right\}, 
\end{align}
where $1 \leq k \leq M_q$, $M_q = \mathrm{card}\left(\mathbb{T}^{(q)}_{j} \right)$, and $1 \leq q \leq d$.  Note that we can compute the centroid of a $q$-simplex as the arithmetic average of its vertex coordinates, e.g.
\begin{align}
    \bm{c}_{j,k}^{(q)} = \frac{1}{q+1} \sum_{\bm{p} \subset T_{j,k}^{(q)}} \bm{p}.
\end{align}

We have introduced all of the relevant vertex-based quantities (see above). These quantities have been partitioned into convenient sets, denoted by (for example) $\mathbb{T}_{j}^{(q)}$ and $\mathcal{C}_{j}^{(q)}$. In a natural fashion, we can also construct larger sets, which are based on the \emph{power set} of the smaller sets. In particular, we can define the supersets of all simplices and centroids that share the point $\bm{p}_{j}$ as follows
\begin{align}
    \mathbb{T}_{j} &\equiv \left\{  \mathbb{T}^{(1)}_{j},  \mathbb{T}^{(2)}_{j}, \ldots,  \mathbb{T}^{(q)}_{j}, \ldots,  \mathbb{T}^{(d)}_{j} \right\}, \qquad
    \mathcal{C}_{j} \equiv \left\{  \mathcal{C}^{(1)}_{j},  \mathcal{C}^{(2)}_{j}, \ldots,  \mathcal{C}^{(q)}_{j}, \ldots,  \mathcal{C}^{(d)}_{j} \right\}.
\end{align}

\subsection{Median-Dual Definitions}

We are now ready to formulate the definition of the median-dual region.

\begin{definition}[Median-Dual Region]
    Consider a point $\bm{p}_j$ in a $d$-simplicial triangulation. We can form the median-dual region for $\bm{p}_j$ by taking the union of the convex hulls of the centroids (in the superset $\mathcal{C}_j$) which belong to each $d$-simplex $T_{j,k}^{(d)}$, where each simplex shares the vertex $\bm{p}_j$ and $1 \leq k \leq M_{d}$. Equivalently, the median-dual region (MDR) around $\bm{p}_j$ is given by the following
    \begin{align}
       \mathrm{MDR}(\bm{p}_j) &= \bigcup_{k=1}^{M_d} \mathrm{conv}\left( \bm{c} \in \left( \mathcal{C}_{j} \cap T_{j,k}^{(d)} \right) \right), \label{median_def_eq}
    \end{align}
    where $\bm{c}$ is a generic centroid that belongs to the intersection of $\mathcal{C}_j$ and $T_{j,k}^{(d)} \in \mathbb{T}^{(d)}_{j}$.  In general, the centroids $\bm{c}$ are not just volumetric centroids, but they are also the centroids of $(d-1)$-faces, $(d-2)$-faces, $\ldots$, and $1$-faces (edges) which share the vertex $\bm{p}_j$.
    %The median-dual  region formed in this fashion is a star-shaped region with respect to the point~$\bm{p}_j$. 
    \label{mdr_def_one}
\end{definition}

We can now introduce a very rudimentary formula for the hypervolume of each median-dual region. This result is required in order to develop a subsequent, more sophisticated hypervolume identity (see Theorem~\ref{hypervolume_theorem}).

\begin{lemma}
    [Median-Dual Hypervolume]
    The hypervolume of the median-dual region around $\bm{p}_{j}$ is given by the following formula
    \begin{align}
        V\left(\bm{p}_{j}\right) = \frac{1}{d+1} \sum_{k=1}^{M_d} \left| T_{j,k}^{(d)} \right|,
        \label{volume_def}
    \end{align}
    where $T_{j,k}^{(d)} \in \mathbb{T}_{j}^{(d)}$ is a $d$-simplex that shares the node $\bm{p}_j$,  and
    \begin{align}
        \left| T_{j,k}^{(d)} \right| = \mathrm{vol}\left(T_{j,k}^{(d)}  \right),
    \end{align}
    is the hypervolume of the simplex. A simple formula for the $d$-simplex hypervolume appears in~\cite{stein1966note}.
\end{lemma}

\begin{proof}
    By construction, the median dual region covers exactly $1/(d+1)$th of each $d$-simplex which contains the node $\bm{p}_{j}$. The summation of these partial hypervolume contributions from each $T_{j,k}^{(d)}$ yields the total hypervolume of the region.
\end{proof}

\subsection{Edge-Based Quantities}

We can define the set of all $q$-simplices that share the edge $\bm{p}_{k} - \bm{p}_{j}$
\begin{align*}
    \mathbb{T}_{jk}^{(q)} \equiv \left\{T_{jk,1}^{(q)}, T_{jk,2}^{(q)}, \ldots, T_{jk,v}^{(q)}, \ldots T_{jk,N_q}^{(q)}  \right\},
\end{align*}
where $1 \leq v \leq N_q$ and $N_q = \mathrm{card}\left( \mathbb{T}_{jk}^{(q)} \right)$.
The centroids of the $q$-simplices are given by
\begin{align*}
    \mathcal{C}_{jk}^{(q)} \equiv  \left\{\bm{c}_{jk,1}^{(q)}, \bm{c}_{jk,2}^{(q)}, \ldots, \bm{c}_{jk,v}^{(q)}, \ldots \bm{c}_{jk,N_q}^{(q)}  \right\}.
\end{align*}
The supersets of \emph{all} simplices and centroids ($q = 1, \ldots, d$)  that share the edge $\bm{p}_{k} - \bm{p}_{j}$ are given by
\begin{align}
    \nonumber \mathbb{T}_{jk} &= \left\{\mathbb{T}_{jk}^{(1)}, \mathbb{T}_{jk}^{(2)}, \ldots, \mathbb{T}_{jk}^{(q)}, \ldots, \mathbb{T}_{jk}^{(d)}  \right\}, \qquad \mathcal{C}_{jk} = \left\{\mathcal{C}_{jk}^{(1)}, \mathcal{C}_{jk}^{(2)}, \ldots, \mathcal{C}_{jk}^{(q)}, \ldots, \mathcal{C}_{jk}^{(d)}  \right\}.
\end{align}

\subsection{Directed-Hyperarea Vector Definition}

\begin{definition}[Directed-Hyperarea Vectors]

Consider a point $\bm{p}_j$ in a $d$-simplicial triangulation. The directed-hyperarea vectors are denoted by
\begin{align}
    \left\{\bm{n}_{j1}, \bm{n}_{j2}, \ldots, \bm{n}_{jk}, \ldots, \bm{n}_{jM_1}\right\}, 
\end{align}
 where the vector $\bm{n}_{jk}$ has a positive dot product with the edge vector from node $\bm{p}_j$ to node $\bm{p}_k$  
\begin{align}
\bm{n}_{jk} \cdot (\bm{p}_{k} - \bm{p}_{j}) > 0.
\end{align}
Furthermore
\begin{align}
    \bm{n}_{jk} \equiv \sum_{T \in\left\{\mathbb{T}^{(d)}_{jk} \right\}} \bm{n} \left( \hyperface^{T} \right),
    \label{cube_facet_normal}
\end{align}
where $\bm{n} \left( \hyperface^{T} \right)$ is the lumped-normal vector of the $(d-1)$-hypercuboid face that is constructed from centroids which are contained in each $T$; (see Figures~\ref{fig:directedhyperareavector} and \ref{fig:hypercuboidfacet} for an illustration of the quantities in the formula). More precisely, the lumped-normal vector of a hypercuboid is defined as the unique sum of the normal vectors of a simplicial tessellation of the hypercuboid.  For example in 3D, it is the sum of the normal vectors of two triangles which make up a quadrilateral. Here, the magnitude of each normal vector is the area of the corresponding triangle. 

Furthermore, each hypercuboid face is given by
\begin{align}
    \hyperface^{T} \equiv \mathrm{conv}\left( \bm{c} \in \left(\mathcal{C}_{jk} \cap T \right) \right),
\end{align}
where $\bm{c}$ is a generic centroid that belongs to the intersection of the superset $\mathcal{C}_{jk}$ and $T$, and $T$ is a generic $d$-simplex that belongs to
\begin{align*}
    \mathbb{T}_{jk}^{(d)} \equiv  \left\{T_{jk,1}^{(d)}, T_{jk,2}^{(d)}, \ldots, T_{jk,v}^{(d)}, \ldots T_{jk,N_d}^{(d)}  \right\}.
\end{align*}
Here, $\mathbb{T}_{jk}^{(d)}$ is the set of $d$-simplices that share the edge  $\bm{p}_{k} -\bm{p}_{j}$. 
% Equivalently, we can write
% %
% \begin{align}
%     \hyperface^{T} \equiv \mathrm{conv}\left( \bm{c} \in \left(\mathcal{C}_{jk} \cap T_{jk,v}^{(d)} \right) \right).
% \end{align}
\end{definition}

\begin{figure}[h!]
    \centering
    \includegraphics[width = 0.9\textwidth]{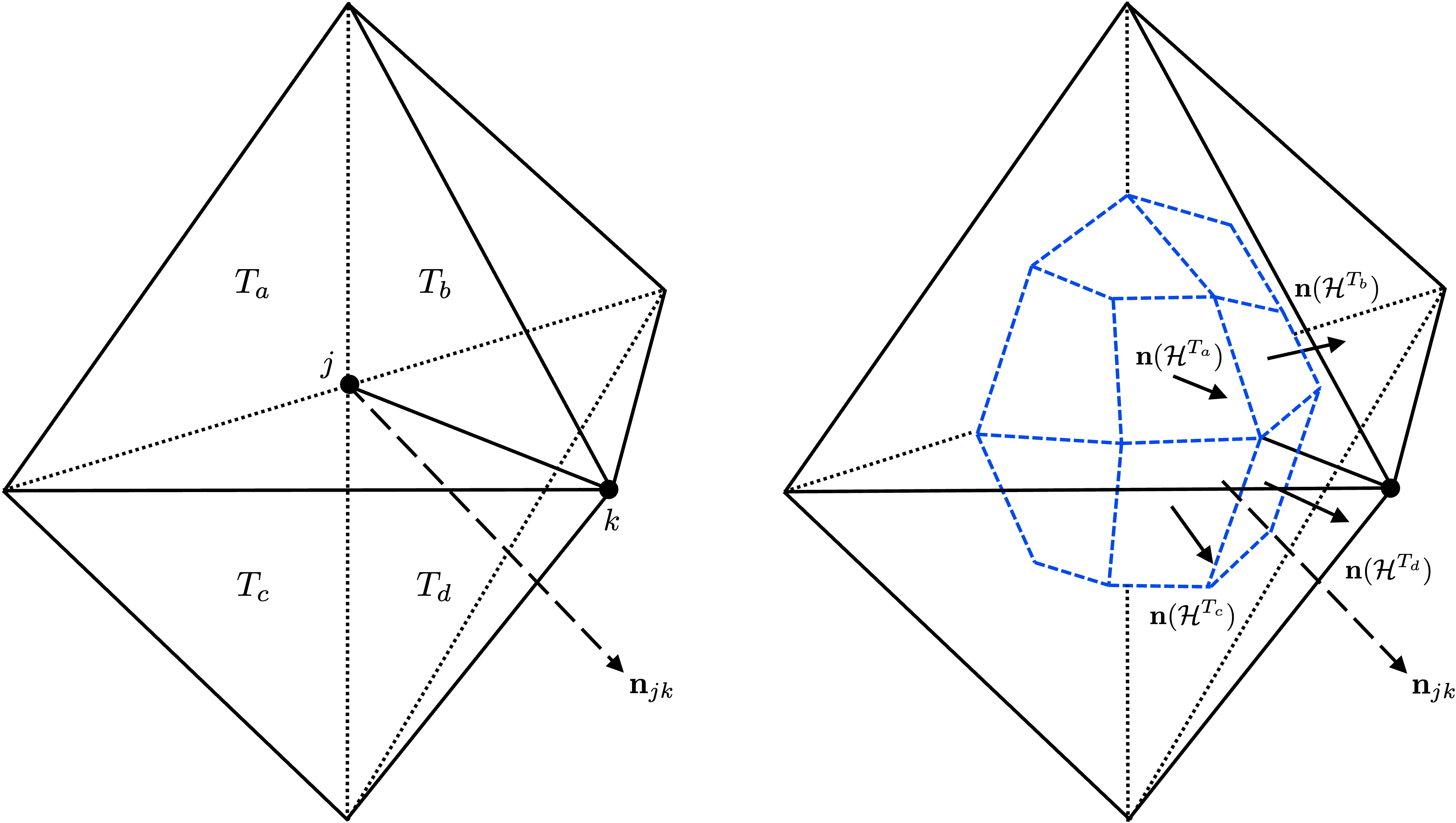}
    \caption{A 3D illustration of a directed-hyperarea vector $\bm{n}_{jk}$ for the edge $\bm{p}_{k}-\bm{p}_{j}$ (left). The edge is shared by four tetrahedra $T_a, T_b, T_c$ and $T_d$. The lumped-normal vectors of the dual hypercuboid faces are summed together in order to obtain $\bm{n}_{jk}$ (right). More precisely, $\bm{n}_{jk} = \bm{n}(\hyperface^{T_a}) + \bm{n}(\hyperface^{T_b}) + \bm{n}(\hyperface^{T_c}) + \bm{n}(\hyperface^{T_d})$. The (partial) median-dual region around node $\bm{p}_{j}$ is highlighted in blue.}
    \label{fig:directedhyperareavector}
\end{figure}

\begin{figure}[h!]
    \centering
    \includegraphics[width = 0.9\textwidth]{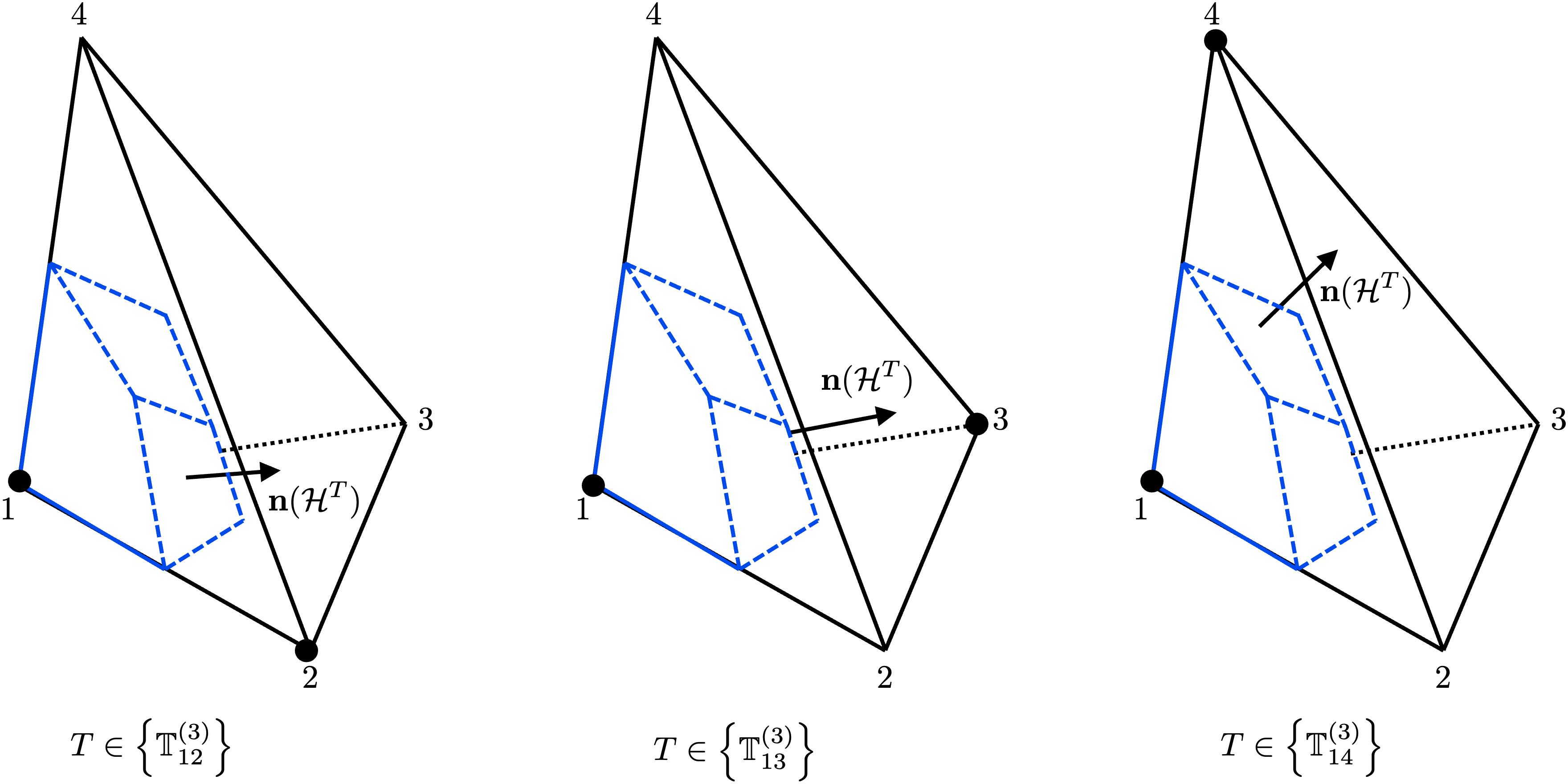}
    \caption{A 3D illustration of the lumped-normal vector of the hypercuboid face $\bm{n}(\hyperface^{T})$ which contributes to directed-hyperarea vector $\bm{n}_{12}$ (left). The lumped-normal vector $\bm{n}(\hyperface^{T})$ which contributes to $\bm{n}_{13}$ (center). The lumped-normal vector $\bm{n}(\hyperface^{T})$ which contributes to $\bm{n}_{14}$ (right). These lumped-normal vectors are shown for a generic tetrahedron $T$ which belongs to $\mathbb{T}^{(3)}_{12}, \mathbb{T}^{(3)}_{13}$, and $\mathbb{T}^{(3)}_{14}$. The (partial) median-dual region around node $\bm{p}_{1}$ is highlighted in blue.}
    \label{fig:hypercuboidfacet}
\end{figure}

\section{Theoretical Properties in $\mathbb{R}^{\lowercase{d}}$} \label{theoretical_properties_d_section}

In this section, we establish important theoretical properties and conjectures which govern  median-dual regions in $\mathbb{R}^{d}$.

\begin{conjecture}[Directed-Hyperarea Vector Identity in $\mathbb{R}^{d}$]
    Suppose that we construct a median-dual region around an interior point $\bm{p}_{j} \in \mathbb{R}^{d}$. In addition, let a generic adjacent edge be denoted by $\bm{p}_{k} - \bm{p}_{j}$. Under these circumstances, the following identity holds
    \begin{align}
        \bm{n}_{jk} = \frac{2}{d(d+1)} \sum_{T \in\left\{\mathbb{T}^{(d)}_{jk} \right\}} \bm{n}_{j}^{T},
    \label{vector_identity_conjecture}
    \end{align}
    where $\bm{n}_{j}^{T}$ is a normal vector associated with the $(d-1)$-face of $T$ opposite $\bm{p}_j$. We note that $\bm{n}_{j}^{T}$ has a magnitude which is equal to the hypervolume of the opposite face, i.e.~$\left\| \bm{n}_{j}^{T}\right\| = |\mathrm{opp}_{j}(T)|$. 
    \label{directed_hyperarea_conjecture}
\end{conjecture}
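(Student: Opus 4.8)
The plan is to prove the conjecture for general $d$ by a careful combinatorial and geometric analysis of the hypercuboid facets $\mathcal{F}^{T}$ appearing in Eq.~\eqref{cube_facet_normal}, reducing everything to a statement about normal vectors of simplices. First I would fix a single $d$-simplex $T \in \mathbb{T}^{(d)}_{jk}$ sharing the edge $\bm{p}_k - \bm{p}_j$, and set up barycentric (centroid) coordinates for $T$: the relevant centroids in $\mathcal{C}_{jk} \cap T$ are the centroids of all sub-faces of $T$ that contain the edge $\bm{p}_j\bm{p}_k$, i.e.\ sub-simplices spanned by $\{\bm{p}_j,\bm{p}_k\}$ together with any subset of the remaining $d-1$ vertices. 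There are $2^{d-1}$ such centroids, and they are the vertices of a combinatorial $(d-1)$-cube; the facet $\mathcal{F}^{T}$ is the convex hull of these points. The key computation is to express $\bm{n}(\mathcal{F}^T)$, the lumped-normal of this hypercuboid, as an explicit linear combination of the edge vector $\bm{p}_k-\bm{p}_j$ and the other edge vectors of $T$ emanating from $\bm{p}_j$ (or dually, in terms of the facet normals $\bm{n}_i^T$ of $T$).

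The second step is to carry out that lumped-normal computation. I would parametrize the cube by $\{0,1\}^{d-1}$, with coordinate axes corresponding to the directions $\tfrac{1}{s}(\bm{p}_{m}-\text{[partial centroid]})$ obtained by toggling the inclusion of vertex $\bm{p}_m$; after simplification these axis vectors are scalar multiples of the simplex edge vectors $\bm{p}_m - \bm{p}_j$. Since the lumped-normal of a parallelepiped-like hypercuboid with edge vectors $\bm{a}_1,\dots,\bm{a}_{d-1}$ is (up to the simplicial-subdivision normalization) the generalized cross product $\bm{a}_1 \times \cdots \times \bm{a}_{d-1}$, scaled by $1/(d-1)!$ to account for tessellating the cube into simplices, I can read off $\bm{n}(\mathcal{F}^T)$ as a fixed rational multiple of a wedge of edge vectors of $T$. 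Re-expressing this wedge of $d-1$ edge vectors through $\bm{p}_j$ in terms of the opposite-facet normal $\bm{n}_j^T$ (the generalized cross product of all $d$ edge vectors through $\bm{p}_j$, minus the edge to $\bm{p}_k$) should produce exactly the claimed factor $\tfrac{2}{d(d+1)}$: the $d!$ from the simplex volume, the $(d-1)!$ from the cube subdivision, and the scaling factors from the centroid coordinates combine, and one must check the bookkeeping gives $2/(d(d+1))$ rather than something else.

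The third step is assembly: sum $\bm{n}(\mathcal{F}^T)$ over all $T \in \mathbb{T}^{(d)}_{jk}$ to get $\bm{n}_{jk}$ on the left of Eq.~\eqref{vector_identity_conjecture}, and observe that the per-simplex identity $\bm{n}(\mathcal{F}^T) = \tfrac{2}{d(d+1)}\bm{n}_j^T$ already holds termwise, so the sum is immediate. I would also verify the orientation/sign conventions are consistent — that $\bm{n}_j^T$ as defined (magnitude $=|\mathrm{opp}_j(T)|$, pointing away from $\bm{p}_j$, say) matches the outward orientation of $\bm{n}(\mathcal{F}^T)$ relative to the partial median-dual cell — and cross-check against the known 2D and 3D formulas of Nishikawa~\cite{Nishikawa:ijnmf2024_inreview}: for $d=2$ the factor is $2/(2\cdot 3)=1/3$ and for $d=3$ it is $2/(3\cdot 4)=1/6$, which should agree with the established results.

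The main obstacle I anticipate is the lumped-normal computation in step two: the hypercuboid $\mathcal{F}^T$ is a combinatorial cube but not an affine (rectangular) box, so its edge vectors vary across the cube, and one must confirm that the ``lumped-normal'' — the sum over a simplicial subdivision of the facet normals — is nonetheless a \emph{well-defined} quantity independent of the subdivision and equal to a clean wedge product. Establishing this well-definedness (it follows because the lumped-normal of any closed oriented polytope boundary telescopes, so only the boundary structure matters) and then pinning down the exact constant through all the centroid-scaling factors is where the real work lies; the rest is essentially linear algebra and combinatorics of subsets of $\{1,\dots,d+1\}$.
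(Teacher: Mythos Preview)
Your proposal contains a concrete error in step three, and the error is foreshadowed by the obstacle you correctly identify in your last paragraph but then underestimate. You claim that ``the per-simplex identity $\bm{n}(\mathcal{F}^T) = \tfrac{2}{d(d+1)}\bm{n}_j^T$ already holds termwise, so the sum is immediate.'' This is false. The paper's explicit $d=4$ computation (culminating in Eq.~\eqref{normal_alpha_expanded_alternative}) gives
\[
\bm{n}(\mathcal{F}^{T_{\alpha}}) \;=\; \frac{1}{10}\!\left[\bm{n}_{j}^{T_{\alpha}} \;+\; \tfrac{1}{2}\!\!\sum_{m\neq j,k}\!\bm{n}_{m}^{T_{\alpha}}\right],
\]
not $\tfrac{1}{10}\,\bm{n}_j^{T_\alpha}$. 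The extra half-sum of the remaining facet normals is genuinely present per simplex; it disappears only after summation over $\mathbb{T}^{(d)}_{jk}$, because each such facet normal $\bm{n}_m^{T_\alpha}$ is cancelled by the equal-and-opposite contribution from the unique neighboring simplex sharing that $(d-1)$-facet. This neighbor-cancellation is precisely where the \emph{interior point} hypothesis enters, and it is why the boundary variant (Remark~\ref{boundary_remark_01}) carries a correction term; a termwise identity would make the interior assumption superfluous and the boundary correction inexplicable.

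The source of the error is exactly what you flagged: the hypercuboid $\mathcal{F}^T$ is a combinatorial cube but \emph{not} a parallelepiped (the edge from $c_S$ to $c_{S\cup\{m\}}$ equals $\tfrac{1}{|S|+3}(\bm{p}_m - c_S)$, which depends on $S$), so its lumped normal is \emph{not} a single clean wedge of fixed edge vectors. The correct per-simplex formula therefore involves all of the $\bm{n}_m^T$ for $m\neq k$, and the missing idea in your plan is the subsequent pairing/cancellation argument across facet-adjacent simplices. Note also that the paper does not prove this statement for general $d$: it explicitly says it is unaware of a proof for $d>4$ and establishes only the case $d=4$ (Theorem~\ref{normal_vector_identity_theorem}). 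A corrected version of your outline --- compute $\bm{n}(\mathcal{F}^T)$ exactly, exhibit the extra $\tfrac{1}{2}\sum_{m\neq j,k}\bm{n}_m^T$ terms, and then prove cancellation --- would mirror the paper's $d=4$ argument, but carrying out the first of those steps in closed form for arbitrary $d$ is the part that remains open.
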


\begin{proof}
    We are unaware of a proof for this conjecture for $d > 4$. However, the cases of $d =2$ and 3 were proven in~\cite{Nishikawa:ijnmf2024_inreview}. In addition, the case of $d = 4$ can be proven using careful arguments, see  Theorem~\ref{normal_vector_identity_theorem} of the present paper.
\end{proof}

\begin{theorem}[Hypervolume Identity in $\mathbb{R}^{d}$]
The following relationship between the hypervolume of the median-dual region $V(\bm{p}_{j})$ and the directed-hyperarea vectors $\bm{n}_{jk}$ holds 
\begin{align}
     V(\bm{p}_j) &= \frac{1}{d^2(d+1)} \sum_{k=1}^{M_1} \left( \sum_{T \in\left\{\mathbb{T}^{(d)}_{jk} \right\}} \left(\bm{p}_{k} - \bm{p}_j\right) \cdot  \bm{n}_{j}^{T} \right) \label{volume_identity} \\[1.0ex]
    &=\frac{1}{2d} \sum_{k=1}^{M_1} \left(\bm{p}_{k} - \bm{p}_j\right) \cdot \bm{n}_{jk}.
    \label{volume_identity_prime}
\end{align}
It is important to note that the first line holds in all cases, whereas the second line only holds if we assume that Conjecture~\ref{directed_hyperarea_conjecture} holds.
\label{hypervolume_theorem}
\end{theorem}

\begin{proof}
    Our objective is to prove that Eqs.~\eqref{volume_def} and~\eqref{volume_identity} are equivalent. Towards this end, let us consider the \emph{altitude} $h_{j}^{T}$ of the face opposite $\bm{p}_{j}$ for each $T \in\left\{\mathbb{T}^{(d)}_{jk} \right\}$, which is given~by
    \begin{align}
        h_{j}^{T} \equiv \left(\bm{p}_{k} - \bm{p}_j\right) \cdot  \frac{\bm{n}_{j}^{T}}{\left\| \bm{n}_{j}^{T}\right\|},
        \label{volume_proof_two}
    \end{align}
    where $\left\| \bm{n}_{j}^{T} \right\|$ is the hypervolume of the face opposite $\bm{p}_{j}$.
    Upon substituting Eq.~\eqref{volume_proof_two} into  Eq.~\eqref{volume_identity}, we obtain
    \begin{align}
        V(\bm{p}_j) = \frac{1}{d^2(d+1)} \sum_{k=1}^{M_1} \left( \sum_{T \in\left\{\mathbb{T}^{(d)}_{jk} \right\}} h_{j}^{T} \left\| \bm{n}_{j}^{T} \right\| \right).
        \label{volume_proof_three}
    \end{align}
    Furthermore, the hypervolume of each $d$-simplex $T \in\left\{\mathbb{T}^{(d)}_{jk} \right\}$ is given by
    \begin{align}
        \left| T \right| = \frac{h_{j}^{T} }{d} \left\| \bm{n}_{j}^{T} \right\|.
        \label{volume_proof_four}
    \end{align}
    Substituting Eq.~\eqref{volume_proof_four} into Eq.~\eqref{volume_proof_three} yields
    \begin{align}
        V(\bm{p}_j) = \frac{1}{d(d+1)} \sum_{k=1}^{M_1} \left( \sum_{T \in\left\{\mathbb{T}^{(d)}_{jk} \right\}} \left|T \right| \right). \label{volume_proof_five}
    \end{align}
    Now, we must compare our newly obtained expression, Eq.~\eqref{volume_proof_five}, to our previously obtained expression for the median-dual hypervolume, Eq.~\eqref{volume_def}. If these equations are identical, then we can immediately conclude that our desired result, Eq.~\eqref{volume_identity}, holds. Upon attempting to equate Eqs.~\eqref{volume_proof_five} and \eqref{volume_def}, we require that
    \begin{align}
        \nonumber \frac{1}{d} \sum_{k=1}^{M_1} \left( \sum_{T \in\left\{\mathbb{T}^{(d)}_{jk} \right\}} \left|T \right| \right) &=  \sum_{m=1}^{M_d} \left| T_{j,m}^{(d)} \right| \\[1.0ex]
        &= \sum_{T \in \left\{ \mathbb{T}^{(d)}_{j}\right\}} \left| T \right|.
        \label{volume_proof_six}
    \end{align}
    Careful analysis reveals that the equality in Eq.~\eqref{volume_proof_six} is correct. By inspection, the left hand side of the equation is a weighted summation over the $d$-simplices that share the edges $\bm{p}_{k}-\bm{p}_{j}$, and the right hand side is a summation over the $d$-simplices which share the point $\bm{p}_{j}$. Evidently, the left hand side and right hand side of our equation contain the \emph{same} simplices, as all simplices which share the point $\bm{p}_{j}$ also contain exactly $d$ edges $\bm{p}_{k}-\bm{p}_{j}$. In fact, it is impossible for a $d$-simplex to contain the point $\bm{p}_{j}$, but fail to contain $d$ of the edges $\bm{p}_{k}-\bm{p}_{j}$ which emanate from it. This observation follows from the definition of a $d$-simplex in $\mathbb{R}^{d}$, as each point of the simplex is connected via edges to $d$ other points of the same simplex. In addition, we note that the summation over edges on the left hand side of our equation must be normalized by a factor of $(1/d)$ in order to account for the fact that each simplex hypervolume $|T|$ contributes to the summation $d$ times, as it is shared by $d$ edges which connect to $\bm{p}_{j}$, (as mentioned previously). This final observation completes the proof. 
\end{proof}

\section{Theoretical Properties in $\mathbb{R}^{4}$} \label{theoretical_properties_four_section}

In this section, we establish an important  theoretical property which governs median-dual regions in $\mathbb{R}^{4}$, along with several key implementation and verification details.

\begin{theorem}[Directed-Hyperarea Vector Identity in $\mathbb{R}^{4}$]
    Suppose that we construct a median-dual region around an interior point $\bm{p}_{j} \in \mathbb{R}^{4}$. In addition, let a generic adjacent edge be denoted by $\bm{p}_{k} - \bm{p}_{j}$. Under these circumstances, the following identity holds
    \begin{align}
        \bm{n}_{jk} = \frac{1}{10} \sum_{T \in\left\{\mathbb{T}^{(4)}_{jk} \right\}} \bm{n}_{j}^{T},
        \label{vector_identity}
    \end{align}
    where $\bm{n}_{j}^{T}$ is a normal vector associated with the $3$-face of $T$ opposite $\bm{p}_j$. We note that $\bm{n}_{j}^{T}$ has a magnitude which is equal to the hypervolume of the opposite face, i.e.~$\left\| \bm{n}_{j}^{T}\right\| = |\mathrm{opp}_{j}(T)|$. 
\label{normal_vector_identity_theorem}
\end{theorem}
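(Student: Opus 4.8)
The plan is to prove the identity \textbf{one $4$-simplex at a time} and then sum over the $4$-simplices $T\in\mathbb{T}^{(4)}_{jk}$ that share the edge $\bm{p}_j\bm{p}_k$. After translating so that $\bm{p}_j=\bm 0$, write a typical such simplex as $T=\mathrm{conv}(\bm{p}_j,\bm{p}_k,\bm{a},\bm{b},\bm{c})$. Then the points of $\mathcal{C}_{jk}\cap T$ are exactly the eight centroids $\bm{c}_S=\frac{1}{2+|S|}\bigl(\bm{p}_k+\sum_{\bm{x}\in S}\bm{x}\bigr)$ with $S\subseteq\{\bm{a},\bm{b},\bm{c}\}$, so $\mathcal{F}^T$ is a combinatorial $3$-cube having the edge-centroid $\bm{c}_{\emptyset}=\frac12\bm{p}_k$ at one corner. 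Its three $2$-faces through $\bm{c}_\emptyset$ are precisely the $3$D median-dual quadrilaterals of the three tetrahedra $\{\bm p_j,\bm p_k,\bm x,\bm y\}$ bounding $T$, and are shared with adjacent $4$-simplices; its three opposite ``far'' $2$-faces each contain the centroid of one triangle $\{\bm p_j,\bm p_k,\bm x\}$ and lie on the boundary of the full dual facet $\bigcup_T\mathcal{F}^T$.

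To evaluate $\bm n(\mathcal{F}^T)$, I would first record that the lumped normal is well defined because any two simplicial tessellations of the cube differ by a closed $3$-cycle in $\mathbb{R}^4$, and the outward normals of the $3$-facets of a bounded $4$-polytope sum to zero; the same observation yields the basepoint identity $\bm n(C)=\sum_{t}\bm n\!\left(\mathrm{conv}(O,t)\right)$, the sum running over the boundary triangles $t$ of a triangulation of a $3$-chain $C$, for any apex $O$. Choosing $O=\frac12\bm p_k$ makes every cone over a near face degenerate, so $\bm n(\mathcal{F}^T)$ collapses to a sum of only six tetrahedron normals: the cones from $\frac12\bm p_k$ over a triangulation of the three far faces. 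Each of these is multilinear in $\bm p_k,\bm a,\bm b,\bm c$, hence $\bm n(\mathcal{F}^T)$ is a fixed linear combination of the four normal vectors built from the $3$-element subsets of $\{\bm p_k,\bm a,\bm b,\bm c\}$; translation invariance and the full permutation antisymmetry in $\{\bm a,\bm b,\bm c\}$ keep this computation short.

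Because the facet of $T$ opposite $\bm p_j$ is $\mathrm{conv}(\bm p_k,\bm a,\bm b,\bm c)$, its normal $\bm n_j^T$ expands --- using $\bm a-\bm p_k,\bm b-\bm p_k,\bm c-\bm p_k$ as edge vectors --- into a combination of the \emph{same} four building blocks. Matching coefficients, I expect the block spanned by $\bm a,\bm b,\bm c$ alone to reproduce exactly $\frac1{10}\bm n_j^T$, leaving a remainder $\bm r^T=\bm n(\mathcal{F}^T)-\frac1{10}\bm n_j^T$ supported on the triples that contain $\bm p_k$: an alternating sum of a quantity $\omega(\{\bm p_j,\bm p_k,\bm x,\bm y\})$ attached to each of the three tetrahedra bounding $T$, with signs inherited from the orientation of $T$. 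The link $L$ of the edge $\bm p_j\bm p_k$ is a triangulated $2$-sphere whose triangles, edges, and vertices are the $T$'s, the tetrahedra $\{\bm p_j,\bm p_k,\bm x,\bm y\}$, and the triangles $\{\bm p_j,\bm p_k,\bm x\}$; hence $\bm r^T=\langle\omega,\partial\tau(T)\rangle$ for a fixed $1$-cochain $\omega$ on $L$, and $\sum_T\bm r^T=\bigl\langle\omega,\partial[L]\bigr\rangle=\bm 0$ since $[L]$ is a cycle. Together with $\sum_T\frac1{10}\bm n_j^T$ from the main blocks, this is precisely Eq.~\eqref{vector_identity}; a sign audit on a single $4$-simplex, where every quantity is explicit, pins down the orientation conventions.

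The crux is the coefficient matching above, and two points will need the explicit (now small) computation to confirm. First, the ``$\bm a\bm b\bm c$'' block of $\bm n(\mathcal{F}^T)$ must equal $\frac1{10}$ times that of $\bm n_j^T$: if it did not, the leftover would be a $2$-cochain on $L$, whose sum over $T$ is $\langle\eta,[L]\rangle$ and need not vanish, so this equality is exactly what makes the theorem true. Second, the surviving $\bm p_k$-terms must genuinely assemble into the alternating edge-cochain form with the correct signs. Everything after that is formal, because $L$ is closed. By contrast, the known $d=3$ case only telescopes a $1$-cycle (the ring of tetrahedra about an edge) by inspection, which is why the $d=4$ argument really needs the cochain viewpoint; the residual work is just the bookkeeping of the six $4$D cross products, controlled by the symmetry reductions noted above.
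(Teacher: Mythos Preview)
Your plan is correct and structurally identical to the paper's proof: compute $\bm n(\mathcal F^{T})$ for a single $4$-simplex, split off $\tfrac1{10}\bm n_j^{T}$, and show the remainder is a sum over the three tetrahedral facets of $T$ containing the edge, which then cancel in pairs across adjacent pentatopes. The paper executes the per-simplex step by brute force via the CFK triangulation of the cuboid and arrives at exactly the formula you anticipate,
\[
\bm n(\mathcal F^{T}) \;=\; \tfrac1{10}\Bigl[\bm n_j^{T} + \tfrac12\bigl(\bm n_{a}^{T}+\bm n_{b}^{T}+\bm n_{c}^{T}\bigr)\Bigr],
\]
with $\bm n_x^{T}$ the outward normal to the facet opposite $\bm x\in\{\bm a,\bm b,\bm c\}$; it then dispatches the extra terms by writing out the three neighbors $T_\beta,T_\gamma,T_\delta$ explicitly and observing $\bm n_x^{T}=-\bm n_{x'}^{T'}$ across each shared tetrahedron. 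Your coning-from-the-edge-midpoint device reaches the same six-tetrahedron sum with somewhat less bookkeeping, and your reading of the cancellation as $\langle\omega,\partial[L]\rangle=0$ on the closed link $L\cong S^2$ is the clean abstract restatement of the paper's pairwise matching. What your framing buys is an evident template for the general-$d$ conjecture (the link of an edge is a $(d-2)$-sphere and the same coboundary argument applies once the per-simplex identity is in hand); what the paper's explicit computation buys is that the constants $\tfrac1{10}$ and $\tfrac12$ are actually pinned down, which you rightly identify as the one place your sketch still owes a short direct calculation.
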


\begin{proof}
    We start by introducing a generic edge $\bm{p}_{2} - \bm{p}_{1}$ which connects points $\bm{p}_{1} \in \mathbb{R}^4$ and $\bm{p}_{2} \in \mathbb{R}^4$. For the sake of this proof, we are interested in computing the directed-hyperarea vector $\bm{n}_{12}$, and showing that the left hand side of Eq.~\eqref{vector_identity} is equivalent to the right hand side, for this case. Towards this end, we assume that the edge $\bm{p}_{2} - \bm{p}_{1}$ is shared by a total of $N_4$ pentatopes ($4$-simplices), where $N_4 = \mathrm{card}\left(\mathbb{T}_{12}^{(4)} \right)$. Let us consider a single one of these $4$-simplices
    \begin{align}
        T_{\alpha} = \mathrm{conv}\left(\left\{\bm{p}_{1},\bm{p}_{2},\bm{p}_{3},\bm{p}_{4}, \bm{p}_{5} \right\}\right), \qquad T_{\alpha} \in \mathbb{T}_{12}^{(4)}.
    \end{align}
    In a natural fashion, the simplex $T_{\alpha}$ contributes to our calculation of $\bm{n}_{12}$. In particular, in accordance with the definition of our directed-hyperarea  vector, (see Eq.~\eqref{cube_facet_normal}), 
    we find that
    \begin{align}
        \bm{n}_{12} = \sum_{T \in\left\{\mathbb{T}^{(4)}_{12} \right\}} \bm{n} \left( \hyperface^{T} \right), \label{normal_lhs_special}
    \end{align}
    where
    \begin{align}
        \hyperface^{T} = \mathrm{conv}\left( \bm{c} \in \left(\mathcal{C}_{12} \cap T \right) \right).
    \end{align}
    Naturally, the contribution from $T_{\alpha}$ to 
    Eq.~\eqref{normal_lhs_special} appears to be given by $\bm{n}(\hyperface^{T_{\alpha}})$. However, this is not the complete truth, as some of the contributions to $\bm{n}_{12}$ from $T_{\alpha}$ will be canceled by its neighbors. In order to avoid overlooking these cancellations, we must simultaneously compute the contributions to $\bm{n}_{12}$ from $T_{\alpha}$ \emph{and} its $3$-face neighbors. With this in mind, let us define the three, 3-face neighbors of $T_{\alpha}$ as follows
    \begin{align}
        T_{\beta} &= \mathrm{conv}\left(\left\{\bm{p}_{1},\bm{p}_{2},\bm{p}_{3},\bm{p}_{4}, \bm{p}_{6} \right\}\right), \\[1.0ex]
        T_{\gamma} &= \mathrm{conv}\left(\left\{\bm{p}_{1},\bm{p}_{2},\bm{p}_{3},\bm{p}_{5}, \bm{p}_{7} \right\}\right), \\[1.0ex]
        T_{\delta} &= \mathrm{conv}\left(\left\{\bm{p}_{1},\bm{p}_{2},\bm{p}_{4},\bm{p}_{5}, \bm{p}_{8} \right\}\right),
    \end{align}
    where $T_{\beta}, T_{\gamma}, T_{\delta} \in \mathbb{T}_{12}^{(4)}$. Figure~\ref{fig:cluster} shows an illustration of $T_{\alpha}$ and its face neighbors $T_{\beta}, T_{\gamma}$, and~$T_{\delta}$.
    \begin{figure}[h!]
    \centering
    \includegraphics[width = 0.7\textwidth]{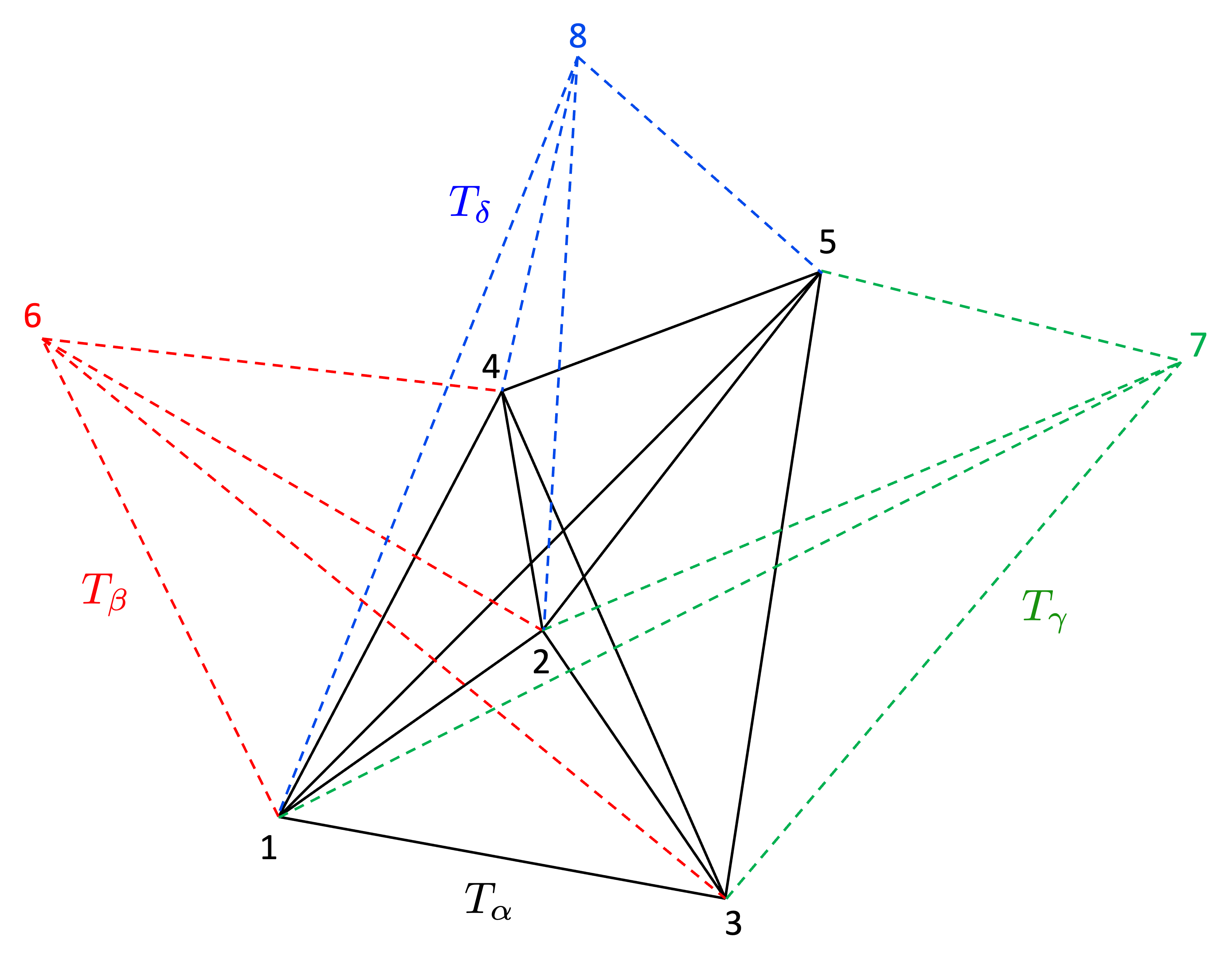}
    \caption{An illustration of the 4-simplex $T_{\alpha}$ and its three face neighbors. The simplex $T_{\alpha}$ is drawn with solid black lines, and its neighboring simplices are drawn with dashed colored lines. In particular, $T_{\beta}$ is drawn with dashed red lines, $T_{\gamma}$ is drawn with dashed green lines, and $T_{\delta}$ is drawn with dashed blue lines.}
    \label{fig:cluster}
    \end{figure}

    We can now compute the individual contributions $\bm{n}\left(\hyperface^{T_{\alpha}}\right)$, $\bm{n}\left(\hyperface^{T_{\beta}}\right)$, $\bm{n}\left(\hyperface^{T_{\gamma}}\right)$, and $\bm{n}\left(\hyperface^{T_{\delta}}\right)$. These lumped-normal vectors are computed based on the median quantities associated with each $d$-simplex. Let us begin by computing the relevant median quantities for $T_{\alpha}$. The centroids $\bm{c} \in \left(\mathcal{C}_{12} \cap T_{\alpha} \right)$ are given by
    \begin{align}
        \bm{c}_{a} &= \frac{1}{2} \left(\bm{p}_{1} + \bm{p}_{2} \right), \\[1.0ex]
        \bm{c}_{b} &= \frac{1}{3} \left(\bm{p}_{1} + \bm{p}_{2} + \bm{p}_{3} \right), \\[1.0ex]
        \bm{c}_{c} &= \frac{1}{3}\left(\bm{p}_{1} + \bm{p}_{2} + \bm{p}_{4} \right), \\[1.0ex]
        \bm{c}_{d} &= \frac{1}{3} \left(\bm{p}_{1} + \bm{p}_{2} + \bm{p}_{5} \right), \\[1.0ex]
        \bm{c}_{e} &= \frac{1}{4} \left(\bm{p}_{1} + \bm{p}_{2} + \bm{p}_{3} + \bm{p}_{4} \right), \\[1.0ex]
        \bm{c}_{f} &= \frac{1}{4}\left(\bm{p}_{1} + \bm{p}_{2} + \bm{p}_{4} + \bm{p}_{5} \right), \\[1.0ex]
        \bm{c}_{g} &= \frac{1}{4} \left(\bm{p}_{1} + \bm{p}_{2} + \bm{p}_{3} + \bm{p}_{5} \right), \\[1.0ex]
        \bm{c}_{h} &= \frac{1}{5} \left(\bm{p}_{1} + \bm{p}_{2} + \bm{p}_{3} + \bm{p}_{4} + \bm{p}_{5} \right).
    \end{align}
    These centroids form a $3$-cuboid with eight vertices. We denote this cuboid by $\hyperface^{T_{\alpha}}$, where 
    \begin{align}
         \hyperface^{T_{\alpha}} = \mathrm{conv}\left( \bm{c} \in \left(\mathcal{C}_{12} \cap T_{\alpha} \right) \right) = \mathrm{conv}\left(\left\{\bm{c}_{a}, \bm{c}_{b}, \bm{c}_{c}, \bm{c}_{d}, \bm{c}_{e}, \bm{c}_{f}, \bm{c}_{g}, \bm{c}_{h} \right\} \right).
    \end{align}
    Computing the lumped-normal vector of the cuboid, $\bm{n}(\hyperface^{T_{\alpha}})$, is relatively straightforward. We simply compute the Coxeter-Freudenthal-Kuhn~\cite{coxeter1934discrete,freudenthal1942simplizialzerlegungen} (CFK) triangulation of the cuboid, and then aggregate (sum over) the normal vectors of the resulting $3$-simplices. These $3$-simplices are given by
    \begin{align}
        T^{(3)}_{A} &= \mathrm{conv}\left(\left\{\bm{c}_{a}, \bm{c}_{b}, \bm{c}_{d}, \bm{c}_{e} \right\} \right), \qquad T^{(3)}_{B} = \mathrm{conv}\left(\left\{\bm{c}_{a}, \bm{c}_{c}, \bm{c}_{d}, \bm{c}_{e} \right\} \right), \\[1.0ex] 
        T^{(3)}_{C} &= \mathrm{conv}\left(\left\{\bm{c}_{c}, \bm{c}_{d}, \bm{c}_{e}, \bm{c}_{f} \right\} \right), \qquad T^{(3)}_{D} = \mathrm{conv}\left(\left\{\bm{c}_{b}, \bm{c}_{d}, \bm{c}_{e}, \bm{c}_{g} \right\} \right), \\[1.0ex] 
        T^{(3)}_{E} &= \mathrm{conv}\left(\left\{\bm{c}_{e}, \bm{c}_{d}, \bm{c}_{g}, \bm{c}_{h} \right\} \right), \qquad T^{(3)}_{F} = \mathrm{conv}\left(\left\{\bm{c}_{d}, \bm{c}_{e}, \bm{c}_{f}, \bm{c}_{h} \right\} \right).
    \end{align}
    Figure~\ref{fig:cfk} shows an illustration of the cuboid, and the associated CFK triangulation.
    \begin{figure}[h!]
    \centering
    \includegraphics[width = 0.8\textwidth]{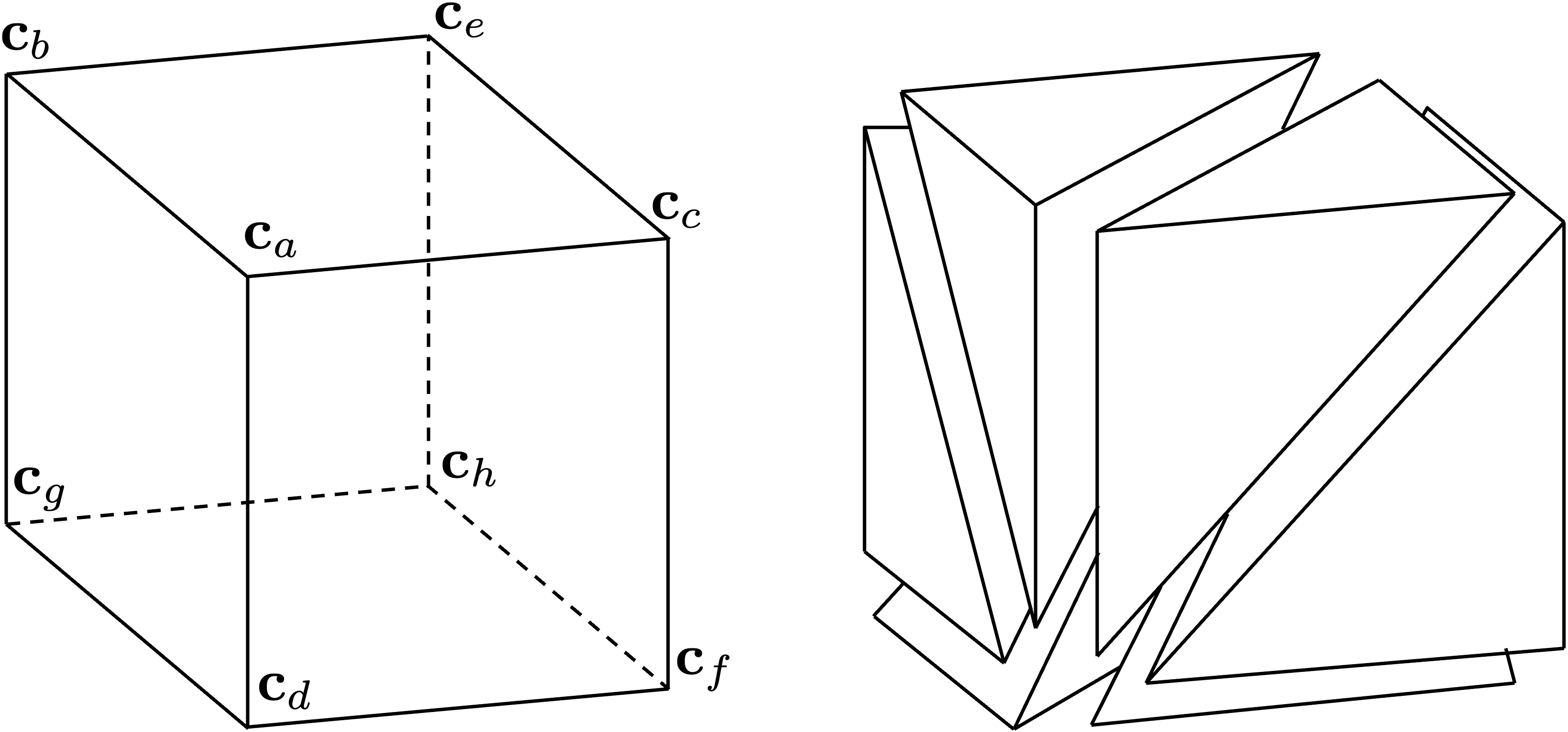}
    \caption{An idealized illustration of the cuboid $\hyperface^{T_{\alpha}}$ (left). The Coxeter-Freudenthal-Kuhn triangulation of the cuboid (right).}
    \label{fig:cfk}
    \end{figure}
    
    The spanning vectors for the $3$-simplices are given by
    \begin{align}
        T^{(3)}_{A}&: \bm{v}_{ba} = \bm{c}_{a} - \bm{c}_{b}, \quad \bm{v}_{bd} = \bm{c}_{d} - \bm{c}_{b}, \quad \bm{v}_{be} = \bm{c}_{e} - \bm{c}_{b}, \\[1.0ex]
        T^{(3)}_{B}&: \bm{v}_{ce} = \bm{c}_{e} - \bm{c}_{c}, \quad \bm{v}_{ca} = \bm{c}_{a} - \bm{c}_{c}, \quad \bm{v}_{cd} = \bm{c}_{d} - \bm{c}_{c}, \\[1.0ex]
        T^{(3)}_{C}&: \bm{v}_{ce} = \bm{c}_{e} - \bm{c}_{c}, \quad \bm{v}_{cd} = \bm{c}_{d} - \bm{c}_{c}, \quad \bm{v}_{cf} = \bm{c}_{f} - \bm{c}_{c}, \\[1.0ex]
        T^{(3)}_{D}&: \bm{v}_{be} = \bm{c}_{e} - \bm{c}_{b}, \quad \bm{v}_{bd} = \bm{c}_{d} - \bm{c}_{b}, \quad \bm{v}_{bg} = \bm{c}_{g} - \bm{c}_{b}, \\[1.0ex]
        T^{(3)}_{E}&: \bm{v}_{gh} = \bm{c}_{h} - \bm{c}_{g}, \quad \bm{v}_{ge} = \bm{c}_{e} - \bm{c}_{g}, \quad \bm{v}_{gd} = \bm{c}_{d} - \bm{c}_{g}, \\[1.0ex]
        T^{(3)}_{F}&: \bm{v}_{fh} = \bm{c}_{h} - \bm{c}_{f}, \quad \bm{v}_{fd} = \bm{c}_{d} - \bm{c}_{f}, \quad \bm{v}_{fe} = \bm{c}_{e} - \bm{c}_{f}.
    \end{align}
    The associated normal vectors are
    \begin{align}
        \label{volume_one}
        T^{(3)}_{A}&: \frac{1}{6} \bm{v}_{ba} \times \bm{v}_{bd} \times \bm{v}_{be}, \qquad T^{(3)}_{B}: \frac{1}{6} \bm{v}_{ce} \times \bm{v}_{ca} \times \bm{v}_{cd}, \\[1.0ex]
        T^{(3)}_{C}&: \frac{1}{6} \bm{v}_{ce} \times \bm{v}_{cd} \times \bm{v}_{cf}, \qquad T^{(3)}_{D}: \frac{1}{6} \bm{v}_{be} \times \bm{v}_{bd} \times \bm{v}_{bg}, \\[1.0ex]
        T^{(3)}_{E}&: \frac{1}{6} \bm{v}_{gh} \times \bm{v}_{ge} \times \bm{v}_{gd}, \qquad T^{(3)}_{F}: \frac{1}{6} \bm{v}_{fh} \times \bm{v}_{fd} \times \bm{v}_{fe}.
        \label{volume_six}
    \end{align}
    Here, we have assumed a positive orientation for the normal vector associated with $T_{A}^{(3)}$, and then insisted that all other normal vectors (for $T_{B}^{(3)}$--$T_{F}^{(3)}$) are consistent with this orientation. Note that, we say a vector $\bm{n} \in \mathbb{R}^4$ has a positive orientation if $(\bm{p}_{2} - \bm{p}_{1}) \cdot \bm{n} > 0$. Furthermore, we note that the generalized cross product in $\mathbb{R}^4$ is defined as follows
    \begin{align}
        \bm{u} \times \bm{v} \times \bm{w} = \bm{e}_{1} \begin{vmatrix}
            u_2 & u_3 & u_4 \\[1.0ex]
            v_2 & v_3 & v_4 \\[1.0ex]
            w_2 & w_3 & w_4 
        \end{vmatrix} - \bm{e}_{2} \begin{vmatrix}
            u_1 & u_3 & u_4 \\[1.0ex]
            v_1 & v_3 & v_4 \\[1.0ex]
            w_1 & w_3 & w_4 
        \end{vmatrix} + \bm{e}_{3} \begin{vmatrix}
            u_1 & u_2 & u_4 \\[1.0ex]
            v_1 & v_2 & v_4 \\[1.0ex]
            w_1 & w_2 & w_4 
        \end{vmatrix} - \bm{e}_{4} \begin{vmatrix}
            u_1 & u_2 & u_3 \\[1.0ex]
            v_1 & v_2 & v_3 \\[1.0ex]
            w_1 & w_2 & w_3 
        \end{vmatrix},
    \end{align}
    where $\bm{u}, \bm{v}, \bm{w} \in \mathbb{R}^4$, $\bm{e}_i \in \mathbb{R}^4$, and $i = 1, \ldots, 4$ is the set of coordinate vectors which are 1 in the $i$-th coordinate direction and 0 in all other directions. 
    
    Next, we can sum over the normal vectors (Eqs.~\eqref{volume_one}--\eqref{volume_six}) in order to construct $\bm{n}(\hyperface^{T_{\alpha}})$, as follows
    \begin{align}
        \nonumber \bm{n}(\hyperface^{T_{\alpha}}) = \frac{1}{6} &\left[\bm{v}_{ba} \times \bm{v}_{bd} \times \bm{v}_{be} + \bm{v}_{ce} \times \bm{v}_{ca} \times \bm{v}_{cd} + \bm{v}_{ce} \times \bm{v}_{cd} \times \bm{v}_{cf} \right. \\[1.0ex]
        \nonumber &+ \left. \bm{v}_{be} \times \bm{v}_{bd} \times \bm{v}_{bg} + \bm{v}_{gh} \times \bm{v}_{ge} \times \bm{v}_{gd} + \bm{v}_{fh} \times \bm{v}_{fd} \times \bm{v}_{fe}\right] \\[1.0ex]
        % \nonumber =\frac{1}{6} &\Bigg[ \frac{1}{20}\big( -\bm{p}_{1} \times \bm{p}_{4} \times \bm{p}_{5} - \bm{p}_{2} \times \bm{p}_{4} \times \bm{p}_{5} + \bm{p}_{1} \times \bm{p}_{3} \times \bm{p}_{5}   \\[1.0ex]
        %  \nonumber &+ \bm{p}_{2} \times \bm{p}_{3} \times \bm{p}_{5} - \bm{p}_{1}  \times \bm{p}_{3} \times \bm{p}_{4} - \bm{p}_{2} \times \bm{p}_{3} \times \bm{p}_{4} \big) + \frac{1}{10} \bm{p}_{3} \times \bm{p}_{4} \times \bm{p}_{5} \Bigg] \\[1.0ex]
        \nonumber =\frac{1}{60} &\Bigg[ \frac{1}{2}\big( -\bm{p}_{1} \times \bm{p}_{4} \times \bm{p}_{5} - \bm{p}_{2} \times \bm{p}_{4} \times \bm{p}_{5} + \bm{p}_{1} \times \bm{p}_{3} \times \bm{p}_{5}   \\[1.0ex]
         &+ \bm{p}_{2} \times \bm{p}_{3} \times \bm{p}_{5} - \bm{p}_{1}  \times \bm{p}_{3} \times \bm{p}_{4} - \bm{p}_{2} \times \bm{p}_{3} \times \bm{p}_{4} \big) +  \bm{p}_{3} \times \bm{p}_{4} \times \bm{p}_{5} \Bigg]. \label{normal_alpha_A}
    \end{align}
    Thereafter, we can substitute $\bm{p}_{1} = \bm{p}_{2} - (\bm{p}_{2} - \bm{p}_{1})$ into Eq.~\eqref{normal_alpha_A}, such that
    \begin{align}
        \nonumber \bm{n}(\hyperface^{T_{\alpha}}) = \frac{1}{60} &\Bigg[ \bm{p}_{3} \times \bm{p}_{4} \times \bm{p}_{5} + \bm{p}_{2} \times \bm{p}_{3} \times \bm{p}_{5} - \bm{p}_{2} \times \bm{p}_{3} \times \bm{p}_{4} - \bm{p}_{2} \times \bm{p}_{4} \times \bm{p}_{5} \\[1.0ex]
        &+ \frac{1}{2} \left( \left(\bm{p}_{2} - \bm{p}_{1} \right) \times \bm{p}_{4} \times \bm{p}_{5} - \left(\bm{p}_{2} - \bm{p}_{1} \right) \times \bm{p}_{3} \times \bm{p}_{5} + \left(\bm{p}_{2} - \bm{p}_{1} \right) \times \bm{p}_{3} \times \bm{p}_{4} \right) \Bigg].
        \label{normal_intermediate}
    \end{align}
    The quantity on the first line of Eq.~\eqref{normal_intermediate} has special significance. In order to see this, consider the face opposite $\bm{p}_{1}$ in the element $T_{\alpha}$
    \begin{align}
        \mathrm{opp}_{1}(T_{\alpha}): &  \; \mathrm{conv}\left(\left\{\bm{p}_{2}, \bm{p}_{3},\bm{p}_{4}, \bm{p}_{5} \right\} \right).
    \end{align}
    The spanning vectors for this face are
    \begin{align}
           \mathrm{opp}_{1}(T_{\alpha}): &  \; \bm{v}_{23} = \bm{p}_{3} - \bm{p}_{2}, \quad \bm{v}_{24} = \bm{p}_{4} - \bm{p}_{2}, \quad \bm{v}_{25} = \bm{p}_{5} - \bm{p}_{2}.
    \end{align}
    In turn, the associated normal vector is
    \begin{align}
        \bm{n}_{1}^{T_{\alpha}} &= \frac{1}{6} \bm{v}_{23} \times \bm{v}_{24} \times \bm{v}_{25},
    \end{align}
    or equivalently,
    \begin{align}
        \label{opposite_normal_one}
        \bm{n}_{1}^{T_{\alpha}} &= \frac{1}{6} \left(\bm{p}_{3} \times \bm{p}_{4} \times \bm{p}_{5} + \bm{p}_{2} \times \bm{p}_{3} \times \bm{p}_{5} - \bm{p}_{2} \times \bm{p}_{3} \times \bm{p}_{4} - \bm{p}_{2} \times \bm{p}_{4} \times \bm{p}_{5} \right).
    \end{align}
    Here, we assume that $\bm{n}_{1}^{T_{\alpha}}$ has a positive orientation. Upon substituting Eq.~\eqref{opposite_normal_one} into Eq.~\eqref{normal_intermediate}, we obtain
    \begin{align}
        \bm{n}(\hyperface^{T_{\alpha}}) = \frac{1}{10} \Bigg[ \bm{n}_{1}^{T_{\alpha}} + \frac{1}{12} \left(\bm{p}_{2} - \bm{p}_{1} \right) \times \left(  \bm{p}_{4} \times \bm{p}_{5} - \bm{p}_{3} \times \bm{p}_{5} + \bm{p}_{3} \times \bm{p}_{4} \right) \Bigg]. \label{normal_alpha_B}
    \end{align}
    Setting Eq.~\eqref{normal_alpha_B} aside for the moment, we can introduce the following identities
    \begin{align}
        \nonumber &\left(\bm{p}_{2} - \bm{p}_{1} \right) \times \bm{p}_{3} \times \bm{p}_{4} \\[1.0ex]
        \nonumber &= \left(\bm{p}_{2} - \bm{p}_{1} \right) \times \left[ \left(\bm{p}_{3} -\bm{p}_{1} + \bm{p}_{1} \right) \times \left(\bm{p}_{4} -\bm{p}_{1} + \bm{p}_{1} \right) \right] \\[1.0ex]
        &= \left(\bm{p}_{2} - \bm{p}_{1} \right) \times \left[ \left(\bm{p}_{3} -\bm{p}_{1} \right) \times \left(\bm{p}_{4} -\bm{p}_{1} \right) + \left(\bm{p}_{3} -\bm{p}_{1} \right) \times \bm{p}_{1} + \bm{p}_{1} \times \left(\bm{p}_{4} - \bm{p}_{1} \right)\right], \label{cross_id_one} \\[1.0ex]
        \nonumber &\left(\bm{p}_{2} - \bm{p}_{1} \right) \times \bm{p}_{3} \times \bm{p}_{5} \\[1.0ex]
        \nonumber &= \left(\bm{p}_{2} - \bm{p}_{1} \right) \times \left[ \left(\bm{p}_{3} -\bm{p}_{1} + \bm{p}_{1} \right) \times \left(\bm{p}_{5} -\bm{p}_{1} + \bm{p}_{1} \right) \right] \\[1.0ex]
        &= \left(\bm{p}_{2} - \bm{p}_{1} \right) \times \left[ \left(\bm{p}_{3} -\bm{p}_{1} \right) \times \left(\bm{p}_{5} -\bm{p}_{1} \right) + \left(\bm{p}_{3} -\bm{p}_{1} \right) \times \bm{p}_{1} + \bm{p}_{1} \times \left(\bm{p}_{5} - \bm{p}_{1} \right)\right], \label{cross_id_two} \\[1.0ex]
        \nonumber &\left(\bm{p}_{2} - \bm{p}_{1} \right) \times \bm{p}_{4} \times \bm{p}_{5} \\[1.0ex]
        \nonumber &= \left(\bm{p}_{2} - \bm{p}_{1} \right) \times \left[ \left(\bm{p}_{4} -\bm{p}_{1} + \bm{p}_{1} \right) \times \left(\bm{p}_{5} -\bm{p}_{1} + \bm{p}_{1} \right) \right] \\[1.0ex]
        &= \left(\bm{p}_{2} - \bm{p}_{1} \right) \times \left[ \left(\bm{p}_{4} -\bm{p}_{1} \right) \times \left(\bm{p}_{5} -\bm{p}_{1} \right) + \left(\bm{p}_{4} -\bm{p}_{1} \right) \times \bm{p}_{1} + \bm{p}_{1} \times \left(\bm{p}_{5} - \bm{p}_{1} \right)\right]. \label{cross_id_three}
    \end{align}
    Upon combining Eqs.~\eqref{cross_id_one}--\eqref{cross_id_three}, one obtains
    \begin{align}
        \nonumber &\left(\bm{p}_{2} - \bm{p}_{1} \right) \times \left(  \bm{p}_{4} \times \bm{p}_{5} - \bm{p}_{3} \times \bm{p}_{5} + \bm{p}_{3} \times \bm{p}_{4} \right) \\[1.0ex]
        &= \left(\bm{p}_{2} - \bm{p}_{1} \right) \times \Big[ \left(\bm{p}_{4} -\bm{p}_{1} \right) \times \left(\bm{p}_{5} -\bm{p}_{1} \right) - \left(\bm{p}_{3} -\bm{p}_{1} \right) \times \left(\bm{p}_{5} -\bm{p}_{1} \right) +\left(\bm{p}_{3} -\bm{p}_{1} \right) \times \left(\bm{p}_{4} -\bm{p}_{1} \right) \Big]. \label{cross_id_comb}
    \end{align}
    We now return our attention to Eq.~\eqref{normal_alpha_B}. Upon substituting Eq.~\eqref{cross_id_comb} into Eq.~\eqref{normal_alpha_B}, one obtains
    \begin{align}
        \nonumber \bm{n}(\hyperface^{T_{\alpha}}) = \frac{1}{10} \Bigg[ \bm{n}_{1}^{T_{\alpha}} + \frac{1}{12} &\Big[ \left(\bm{p}_{2} - \bm{p}_{1} \right) \times \left(\bm{p}_{4} -\bm{p}_{1} \right) \times \left(\bm{p}_{5} -\bm{p}_{1} \right)  \\[1.0ex]
        \nonumber &-\left(\bm{p}_{2} - \bm{p}_{1} \right) \times \left(\bm{p}_{3} -\bm{p}_{1} \right) \times \left(\bm{p}_{5} -\bm{p}_{1} \right) \\[1.0ex]
         &+\left(\bm{p}_{2} - \bm{p}_{1} \right) \times \left(\bm{p}_{3} -\bm{p}_{1} \right) \times \left(\bm{p}_{4} -\bm{p}_{1} \right) \Big] \Bigg]. \label{normal_alpha_C}
    \end{align}
    The rightmost terms inside the parentheses of Eq.~\eqref{normal_alpha_C} are associated with the opposite faces of $T_{\alpha}$. In particular, we can introduce three such opposite faces
    \begin{align}
        \mathrm{opp}_{3}(T_{\alpha}): &  \; \mathrm{conv}\left(\left\{\bm{p}_{1}, \bm{p}_{2},\bm{p}_{4}, \bm{p}_{5} \right\} \right), \\[1.0ex]
        \mathrm{opp}_{4}(T_{\alpha}): &  \; \mathrm{conv}\left(\left\{\bm{p}_{1}, \bm{p}_{2},\bm{p}_{3}, \bm{p}_{5} \right\} \right), \\[1.0ex]
        \mathrm{opp}_{5}(T_{\alpha}): &  \; \mathrm{conv}\left(\left\{\bm{p}_{1}, \bm{p}_{2},\bm{p}_{3}, \bm{p}_{4} \right\} \right).
    \end{align}
    The associated normal vectors are given by
    \begin{align}
        \bm{n}_{3}^{T_{\alpha}} &= \frac{1}{6} \left(\bm{p}_{2} - \bm{p}_{1} \right) \times \left(\bm{p}_{4} -\bm{p}_{1} \right) \times \left(\bm{p}_{5} -\bm{p}_{1} \right), \label{normal_opp_three}  \\[1.0ex]
        \bm{n}_{4}^{T_{\alpha}} &= \frac{1}{6} \left(\bm{p}_{2} - \bm{p}_{1} \right) \times \left(\bm{p}_{5} -\bm{p}_{1} \right) \times \left(\bm{p}_{3} -\bm{p}_{1} \right), \label{normal_opp_four} \\[1.0ex]
        \bm{n}_{5}^{T_{\alpha}} &= \frac{1}{6} \left(\bm{p}_{2} - \bm{p}_{1} \right) \times \left(\bm{p}_{3} -\bm{p}_{1} \right) \times \left(\bm{p}_{4} -\bm{p}_{1} \right), \label{normal_opp_five}
    \end{align}
    where the orientations of these vectors are taken to be consistent with $\bm{n}_{1}^{T_{\alpha}}$. Next, upon substituting Eqs.~\eqref{normal_opp_three}--\eqref{normal_opp_five} into Eq.~\eqref{normal_alpha_C}, we obtain
    \begin{align}
        \bm{n}(\hyperface^{T_{\alpha}}) = \frac{1}{10} \left[\bm{n}_{1}^{T_{\alpha}} + \frac{1}{2} \left(\bm{n}_{3}^{T_{\alpha}} +\bm{n}_{4}^{T_{\alpha}} + \bm{n}_{5}^{T_{\alpha}} \right) \right].
        \label{normal_alpha_expanded_alternative}
    \end{align}
    Thereafter, we can show that the contributions from $\bm{n}_{3}^{T_{\alpha}}, \bm{n}_{4}^{T_{\alpha}}$, and $\bm{n}_{5}^{T_{\alpha}}$ in Eq.~\eqref{normal_alpha_expanded_alternative} are canceled by neighboring pentatopes $T_{\beta}$, $T_{\gamma}$, and $T_{\delta}$. In particular, we find that
    \begin{align}
         \bm{n}(\hyperface^{T_{\beta}}) = \frac{1}{10} \left[\bm{n}_{1}^{T_{\beta}} + \frac{1}{2} \left(\bm{n}_{3}^{T_{\beta}} +\bm{n}_{4}^{T_{\beta}} + \bm{n}_{6}^{T_{\beta}} \right) \right], \\[1.0ex]
         \bm{n}(\hyperface^{T_{\gamma}}) = \frac{1}{10} \left[\bm{n}_{1}^{T_{\gamma}} + \frac{1}{2} \left(\bm{n}_{3}^{T_{\gamma}} +\bm{n}_{5}^{T_{\gamma}} + \bm{n}_{7}^{T_{\gamma}} \right) \right], \\[1.0ex]
         \bm{n}(\hyperface^{T_{\delta}}) = \frac{1}{10} \left[\bm{n}_{1}^{T_{\delta}} + \frac{1}{2} \left(\bm{n}_{4}^{T_{\delta}} +\bm{n}_{5}^{T_{\delta}} + \bm{n}_{8}^{T_{\delta}} \right) \right],
    \end{align}
    and furthermore
    \begin{align}
        \bm{n}_{3}^{T_{\alpha}} = -\bm{n}_{8}^{T_{\delta}}, \quad \bm{n}_{4}^{T_{\alpha}} = -\bm{n}_{7}^{T_{\gamma}}, \quad \bm{n}_{5}^{T_{\alpha}} = -\bm{n}_{6}^{T_{\beta}}.
    \end{align}
    Therefore, the only net contribution from $T_{\alpha}$ to the summation
    \begin{align}
        \bm{n}(\hyperface^{T_{\alpha}}) + \bm{n}(\hyperface^{T_{\beta}}) + \bm{n}(\hyperface^{T_{\gamma}}) + \bm{n}(\hyperface^{T_{\delta}}),
    \end{align}
    is
    \begin{align}
        \frac{1}{10} \bm{n}_{1}^{T_{\alpha}}.  
    \end{align}
    Since $T_{\alpha} = \mathrm{conv}\left(\left\{\bm{p}_{1},\bm{p}_{2},\bm{p}_{3},\bm{p}_{4}, \bm{p}_{5} \right\}\right)$ is a generic $4$-simplex which shares the edge $\bm{p}_{2} - \bm{p}_{1}$, we conclude that $T_{\beta}$, $T_{\gamma}$, and $T_{\delta}$
    make similar net contributions
    \begin{align}
        \frac{1}{10} \bm{n}_{1}^{T_{\beta}}, \frac{1}{10} \bm{n}_{1}^{T_{\gamma}}, \frac{1}{10} \bm{n}_{1}^{T_{\delta}}.
    \end{align}
    Summing over the contributions from each pentatope that shares the edge $\bm{p}_{2} - \bm{p}_{1}$ produces the desired result (Eq.~\eqref{vector_identity}).
\end{proof}

\begin{remark}[Boundary Considerations]  Theorem~\ref{normal_vector_identity_theorem} needs to be modified slightly if $\bm{p}_{j}$ is a boundary point. In this case, Eq.~\eqref{vector_identity} can be changed as follows
    \begin{align}
        \bm{n}_{jk} = \frac{1}{10} \left( \sum_{T \in\left\{\mathbb{T}^{(4)}_{jk} \right\}} \bm{n}_{j}^{T} + \frac{1}{2} \sum_{B \in \{ \mathbb{T}_{jk}^{(3)} \cap \partial \Omega \}}  \bm{n}_{B}\right),
        \label{vector_identity_boundary}
    \end{align}
    where $\bm{n}_{B}$ is the outward-pointing normal vector of the boundary 3-simplex $B$.
    \label{boundary_remark_01}
        % Explain Eq.~\eqref{normal_alpha_expanded_alternative} needs to be corrected if the edge is a boundary edge, here: simply add $ \sum_{B \in \{ \mathbb{T}_{jk}^{(3)} \cap \partial \Omega \}} \frac{1}{20} {\bf n}_{B}$ to Eq.~\eqref{vector_identity}, where ${\bf n}_{B}$ is the outward hyperarea vector of a boundary $3$-simplex $B$.
\end{remark}

\begin{remark}[Implementation Details]
    We recommend implementing Eq.~\eqref{vector_identity} from Theorem~\ref{normal_vector_identity_theorem} and Eq.~\eqref{vector_identity_boundary} from Remark~\ref{boundary_remark_01}  as follows:
    \begin{itemize}
        \item Loop over each 4-simplex in the triangulation.
        \item For each 4-simplex $T$, loop over the 3-faces. 
        \item For each face, identify the vertices of the face in the global numbering system, e.g.~$\bm{p}_{i_1}, \bm{p}_{i_2}, \bm{p}_{i_3}$, and $\bm{p}_{i_4}$, along with the vertex opposite the face, e.g.~$\bm{p}_{i_5}$.
        \item Update the directed-hyperarea vector for the edge $\bm{p}_{k} - \bm{p}_{j}$ using each vertex of the face
        \begin{align}
            \bm{n}_{jk} = \bm{n}_{jk} + \frac{1}{10} \bm{n}_{j}^{T}, 
        \end{align}
        where $(j, k) \in \{ (r,s) \in \{ i_1, i_2, i_3, i_4, i_5 \}^2 \,\, | \,\, r < s \}$.
        %$j = i_5$ and $k = i_1, i_2, i_3$ and $i_4$. 
        Note that we store only one directed-hyperarea vector per edge, $\bm{n}_{jk}$ in the direction from the smaller node number to the larger node number (i.e. $ j< k$), because its counterpart $\bm{n}_{kj}$ is given by $\bm{n}_{kj}=-\bm{n}_{jk}$.
        \item Loop over each boundary 3-simplex $B$.
        \item For each boundary face, identify the vertices of the face in the global numbering system, e.g.~$\bm{p}_{i_1}, \bm{p}_{i_2}, \bm{p}_{i_3}$ and $\bm{p}_{i_4}$.
        \item Correct the directed-hyperarea vector for the edge $\bm{p}_{k} - \bm{p}_{j}$ using each vertex of the boundary face
        \begin{align}
            \bm{n}_{jk} = \bm{n}_{jk} + \frac{1}{20} \bm{n}_{B},
        \end{align}
        where $(j, k) \in \{ (r,s) \in \{ i_1, i_2, i_3, i_4\}^2 \,\, | \,\, r < s \}$.
      %  \item If necessary, correct the orientation of each directed-hyperarea vector. In particular, if $j < k$, we require that $\bm{n}_{jk} \cdot \left(\bm{p}_{k} - \bm{p}_{j}\right) > 0$.
    \end{itemize}
    % Explain that the sum of the hyperarea vectors is zero at all interior nodes (conservation at interior nodes), and the same is true for boundary nodes if we add $ \frac{1}{5} {\bf n}_{B}$ to each boundary node of a boundary $3$-simplex $B$ (conservation at boundary nodes). {\color{red} [Comment (you can delete this once you read it): a boundary closure formula for 2nd/3rd-order accuracy will have to be derived separately. This is beyond the scope of this paper. See the last sentence added to the first paragraph in the next section.] }
    \label{boundary_remark_02}
\end{remark}

\begin{remark}[Verification Details]
    Once the directed-hyperarea vectors have been computed using the procedure of Remark~\ref{boundary_remark_02}, they can be verified as follows:
    \begin{itemize}
        \item Create node vectors $\bm{a}_{j}$ associated with each node $\bm{p}_j$.
        \item Loop over each edge $\bm{p}_{k} - \bm{p}_{j}$ 
($j < k$) of the triangulation.
        \item Update the node vectors
        \begin{align}
            \bm{a}_{j} = \bm{a}_{j} + \bm{n}_{jk}, \qquad \bm{a}_{k}  = \bm{a}_{k}  - \bm{n}_{jk} \,\, = \bm{a}_{k} + \bm{n}_{kj}.
        \end{align}
        \item Loop over each boundary 3-simplex $B$.
        \item For each boundary face, identify the vertices of the face in the global numbering system, e.g.~$\bm{p}_{i_1}, \bm{p}_{i_2}, \bm{p}_{i_3}$ and $\bm{p}_{i_4}$.
        \item Update the node vector
        \begin{align}
            \bm{a}_{j} = \bm{a}_{j} + \frac{1}{4} \bm{n}_{B},
        \end{align}
        where $j = i_1, i_2, i_3$ and $i_4$.
    \end{itemize}
    When we complete the procedure above, if each $\bm{a}_{j} = \bm{0}$, then we have established the correctness of the implementation.
    \label{boundary_remark_03}
\end{remark}

Similar procedures can be used to verify the implementation of directed hyperarea vectors in dimensions 2 and 3. The interested reader is encouraged to consult~\cite{Nishikawa:ijnmf2024_inreview} for details. 

%%%%%%%%%%%%%%%%%%%%%%%%%%%%%%%%%%%%%%%%%%%%%%%%

 \section{Solver Details and Complexity} \label{complexity_section}

 In what follows, we describe additional details of the edge-based scheme for application (programming) purposes. Thereafter, we perform a complexity analysis in order to characterize the scheme's computational advantages.

\vspace{0.15in}

\subsection{Solver Details}

In this section, we describe the procedure we use to compute the residual for our edge-based, discretization method. 
In addition, we discuss the 2-stage Runge Kutta (RK) solver that we use to converge our method to a final solution. 

Throughout the remainder of this discussion, we assume that our edge-based method is used to solve a scalar partial differential equation of the form
\begin{align}
    \bm{a} \cdot \nabla u = \nabla \cdot (\bm{a} u) = f,
\end{align}
where $\nabla = \left(\partial/\partial x_{1}, \partial/\partial x_{2}, \ldots, \partial/\partial x_{d} \right)$ is the $d$-vector gradient operator, $u$ is the scalar solution, $f$ is a scalar forcing function, and $\bm{a} \in \mathbb{R}^d$ is a constant advection direction. Evidently, 
\begin{align}
     \nabla \cdot \bm{\Phi} = f,
    \label{linear_advection_eq}
\end{align}
where
\begin{align}
    \bm{\Phi} = \bm{a} u, 
\end{align}
is the advective flux. We note that the divergence of the flux can be computed as follows
\begin{align}
    \nabla \cdot \bm{\Phi} = \lim_{|\Omega| \rightarrow 0} \frac{1}{|\Omega|} \int_{\partial \Omega} \bm{\Phi} \cdot \bm{n} \, ds,
\end{align}
where $ds$ is an infinitesimal hyperarea. In a natural fashion, we can approximate the divergence as 
\begin{align}
    \nabla \cdot \bm{\Phi} \approx \frac{1}{V} \int_{\partial \Omega} \bm{\Phi} \cdot \bm{n} \, ds,
\end{align}
for non-zero $V = |\Omega|$. Note that this relation is exact if the flux is a linear function \cite{nishikawa_centroid:JCP2020}, and therefore, the right hand side is a sufficiently accurate approximation to the flux divergence for second-order accuracy. If we choose $\Omega$ to be the median dual volume that contains node $\bm{p}_j$, then the divergence at node $\bm{p}_j$ is 
\begin{align}
    \left(\nabla \cdot \bm{\Phi}\right)_{j} \approx \frac{1}{V_j} \int_{\partial \Omega_j} \bm{\Phi} \cdot \bm{n} \, ds,
\end{align}
where $V_j = V(\bm{p}_j)$ is the dual volume.
%In order to compute the residual at the node, the flux contributions between that node and its neighbors are combined with the contribution of the forcing function as follows
%HERE, I would write as below. What do you think?
In our edge-based method, we discretize the surface integral using numerical fluxes defined at edge midpoints with directed-hyperarea vectors, and construct the residual, a discrete approximation to the linear advection equation (Eq.~\eqref{linear_advection_eq}) at node $\bm{p}_j$, as follows
\begin{align}
    \text{Res}_j = \sum_{\bm{p}_k\in \mathbb{T}_{j}} \Phi_{jk}
    \|\bm{n}_{jk}\| 
    -f_jV_j,
    \label{residual}
\end{align}
where $f_j = f(\bm{p}_j)$ is the forcing function evaluated at node $\bm{p}_j$, $\Phi_{jk}$ is the numerical flux from node $\bm{p}_j$ to node $\bm{p}_k$, and $\mathbb{T}_j$ is the set of all simplices which contain node $\bm{p}_j$. The residual is conservative in the sense that the sum of $Res_j$ over all nodes will depend only on the boundary data. 
The flux $\Phi_{jk}$ incorporates the current solution at both nodes $\bm{p}_j$ and $\bm{p}_k$ through two terms $u_L$ and $u_R$, where $u_L$ involves the gradient at $\bm{p}_j$ while $u_R$ involves the gradient at $\bm{p}_k$ as follows
\begin{align}
    u_L &= u_j + \frac{1}{2} \nabla u_j \cdot (\bm{x}_k-\bm{x}_j), \\
    u_R &= u_k - \frac{1}{2} \nabla u_k \cdot (\bm{x}_k-\bm{x}_j).
\end{align}
The gradients $\nabla u_j$ and $\nabla u_k$ are computed by a linear least-squares (LSQ) method: e.g., at the node $\bm{p}_j$, 
\begin{align}
 \nabla u_j
 =
 \sum_{\bm{p}_k\in \mathbb{T}_{j}} 
 {\bf C}_{jk}
 (u_k - u_j ),
\end{align}
where ${\bf C}_{jk}$ is a vector of LSQ coefficients corresponding to the $k$-th column of $\left( {\bf A}^t {\bf A} \right)^{-1} {\bf A}^t$ (the superscript $t$ indicates transpose), and ${\bf A}^t$ is a $d \times M_1$ matrix defined by 
\begin{align}
  {\bf A}^t
  =
  \left[  \begin{array}{ccccc} 
  \bm{x}_{1}-\bm{x}_j, \,\,\,
  \bm{x}_{2}-\bm{x}_j, \,\,\,
  \bm{x}_{3}-\bm{x}_j, \,\,\, 
  \cdots, \bm{x}_{k}-\bm{x}_j, 
\cdots, \,\,\,
  \bm{x}_{M_1}-\bm{x}_j 
              \end{array} \right].
\end{align}
Here, ${M_1}$ is the number of edge neighbors of $\bm{p}_j$. 
The numerical flux itself, $\Phi_{jk}$, is calculated by averaging the normal components of the flux through the faces of the median dual volume and including an upwinding term that biases the flux in the upstream direction
\begin{align}
    \Phi_{jk} &= \frac{1}{2}( \hat{\bm{n}}_{jk} \cdot \bm{a})(u_L+u_R) - \frac{1}{2}\left| \hat{\bm{n}}_{jk} \cdot \bm{a}\right|(u_L-u_R).
\end{align}
Here, $\hat{\bm{n}}_{jk} = {\bm{n}}_{jk} /  \|\bm{n}_{jk}\|$, and $\hat{\bm{n}}_{jk}$ is oriented from node $\bm{p}_j$ to node $\bm{p}_k$, so that $\Phi_{jk}$ as defined in the above corresponds to the numerical flux out of the median dual volume surrounding node $\bm{p}_j$. 
%$\Phi_{jk}$ is oriented from node $\bm{p}_j$ to node $\bm{p}_k$, so that  positive values of $\Phi_{jk}$ correspond to the numerical flux out of the median dual volume surrounding node $\bm{p}_j$. 
For more complex governing equations, one can define the numerical flux separately in space and time, and then add the resulting contributions  together \cite{Nishikawa:ICCFD2024}. 
 
To accumulate the total flux into or out of a node, we loop through the edges in the mesh and compute the flux between the two nodes that bound each edge. The residuals for these two nodes, $\bm{p}_j$ and $\bm{p}_k$, are updated after each pass in the loop as follows
\begin{align}
    \text{Res}_j &= \text{Res}_j + \Phi_{jk}, \\
    \text{Res}_k &= \text{Res}_k - \Phi_{jk}.
\end{align}
After all numerical fluxes are computed, we loop through the nodes to accumulate the forcing-function contributions into the residuals
\begin{align}
    \text{Res}_j = \text{Res}_j-f_jV_j.
\end{align}
This completes the description of the residual (Eq.~\eqref{residual}). For linear polynomial solutions, it exactly reproduces the advection equation (Eq.~\eqref{linear_advection_eq}) at node $\bm{p}_j$ on arbitrary simplex-element grids, and therefore, we expect the numerical solution to be second-order accurate. 

Once the residuals are fully defined at each node, we proceed with a two-stage RK scheme to update the solution at each node as follows

\begin{align}
    u^{\star}_j &= u_j^n - \frac{ \Delta t_j\text{Res}_j}{V_j}, \\
    u^{n+1}_j &= \frac{1}{2}(u_j^{\star} + u_j^n) - \frac{1}{2}\frac{ \Delta t_j\text{Res}_j}{V_j},
\end{align}
where $n$ denotes the time level, $u_j^\star$ is the solution produced by the first stage of the RK scheme at node $\bm{p}_j$, and each $\Delta t_j$ is a pseudo-time step which drives the solution towards a converged state. The pseudo-time step at each node is computed by
\begin{align}
    \Delta t_{j} = \frac{(\text{CFL})V_j}{\sum_{\bm{p}_k\in \mathbb{T}_{j}} 
    \frac{1}{2}  | {\bm{n}}_{jk} \cdot \bm{a} |  },
\end{align}
where $\text{CFL}$ is the Courant-Friedrichs-Lewy number. Throughout our numerical experiments, we set $\text{CFL} = 0.5$. 

Accuracy of the edge-based method has been verified in two and three dimensions \cite{barth_AIAA1991,Katz_Sankaran_JCP:2011,diskin_thomas:AIAA2012-0609,nishikawa:AIAA2010,liu_nishikawa_aiaa2016-3969}, but not in four dimensions. This paper presents, for the first time, accuracy verification results in four dimensions. We also provide accuracy verification results in two and three dimensions to highlight the consistency of the method's behavior across multiple dimensions, and to emphasize its versatility. All of these numerical results are summarized in Section~\ref{results_section}.

%In this edge-based finite volume method, we used first-order polynomials to approximate the solution so we expect a second order convergence rate.

%%%%%%%%%%%%%%%%%%%%%%%%%%%%%%%%%%%%%%%%%%%%%%%%

 \subsection{Complexity Analysis}

In this section, we perform a simplified complexity analysis for the edge-based method. Here, we are interested in comparing the complexity of the edge-based method to that of a traditional continuous Galerkin (CG) method. We note that a comparison to traditional discontinuous Galerkin (DG) methods is also possible. Such a comparison may be performed indirectly by comparing edge-based methods to CG methods, and then comparing CG methods to DG methods. Evidently, the comparison between edge-based and CG methods  appears below; in addition, the  comparison between CG and DG methods has already been rigorously explored in the three-dimensional setting (see~\cite{anderson2011petrov,glasby2013comparison} for details).

 The edge-based method is well-known to be an efficient alternative to the $P_1$ CG method (the linear-polynomial CG method) on simplex-element meshes in 2D and 3D~\cite{barth_AIAA1991}. The latter has been successfully used for space-time simulations~\cite{Behr_2008IJNMF,MasudHughes_1997CMAME}. In what follows, we compare the complexities of the two methods and show that the edge-based method is still more efficient in four dimensions. We note that the two methods are mathematically equivalent on simplex-element meshes if the numerical flux is evaluated by using the arithmetic average of the fluxes associated with the two endpoints of an edge~\cite{barth_AIAA1991,Nishikawa_Proof_EB_equiv_P1Galerkin}.
In the remainder of this section, we provide a complexity analysis for this simple case and comment on additional work required for both methods. Throughout our analysis, we will ignore boundary effects  because the computational cost is dominated by calculations associated with interior points of the domain.

 \subsubsection{The $P_1$ CG Residual}

 The $P_1$ CG method applied to the linear advection equation gives the following residual at a node $\bm{p}_j$
\begin{align}
{Res}_j 
=
\frac{1}{d} 
\sum_{T \in \mathbb{T}_{j}^{(d)}}
\left( 
\frac{1}{d+1 } 
\sum_{\bm{p}_i \in T} 
 \flux_i
 \right) \cdot \bm{n}^T_j 
=
\frac{1}{d (d+1) } 
\sum_{T \in \mathbb{T}_{j}^{(d)}}
\left(
\sum_{\bm{p}_i \in T} \flux_i \right) \cdot  \bm{n}^T_j , 
  \label{P1_CG_linear_advection}
\end{align}
 where we recall that $T\in \mathbb{T}_{j}^{(d)}$ denotes a single element which belongs to the set of $d$-simplices sharing the node $\bm{p}_j$, and $\bm{p}_i$ denotes a vertex of $T$. In addition, $\flux_i$ is the flux evaluated at node $\bm{p}_i$, and $\bm{n}_j^T$ is the normal vector of the face opposite to node $\bm{p}_j$. This computation is typically implemented in a loop over elements. In particular, for each vertex $\bm{p}_j$ of the element $T$, we perform the following computation
 \begin{align}
{Res}_j
=
{Res}_j 
+
\left(
\sum_{\bm{p}_i \in T} \flux_i 
 \right) 
\cdot \bm{n}^T_j.
\label{P1_CG_linear_advection_elm_loop}
\end{align}
This element-loop step is followed by a loop over all nodes $\bm{p}_j$, 
%of the element,  <-- Here the loops is over all nodes in the domain.
in which we scale the residual as follows 
 \begin{align}
{Res}_j 
=
\frac{1}{d (d+1) } 
{Res}_j. \label{P1_CG_linear_advection_elm_scaling}
\end{align}
The computational costs of Eqs.\eqref{P1_CG_linear_advection_elm_loop} and \eqref{P1_CG_linear_advection_elm_scaling} can be calculated in a straightforward fashion:

\begin{itemize}
    \item Compute fluxes $\flux_i$ at $d+1$ nodes $\bm{p}_i$ in the element $T$. This step requires $d(d+1)$ multiplications for linear advection problems.
    \item Add together the flux contributions from each of the $d+1$ vertices $\bm{p}_i \in T$. This step requires $d^2$ additions.
    \item Compute the dot product of the sum of fluxes from the previous step with the normal vector of the opposite face $\bm{n}_j^T$. This step requires $d$ multiplications and $(d-1)$ additions.
    \item Perform the dot product from the previous step $(d+1)$ times, one for each node $\bm{p}_j$ of the element $T$. 
    \item Add the result to each node $\bm{p}_j\in T$. This step requires $(d+1)$ additions.
    \item Finally, for every node in the mesh, multiply the residual by a factor of $1/(d(d+1))$.
\end{itemize}
 The total number of additions in this process is given by
\begin{align}
    \left(d^2 + (d-1)(d+1)+ (d+1)\right) N_{elements} = d(2d+1) N_{elements},
\end{align}
and the total number of multiplications is given by
\begin{align}
    \left(d(d+1) + d(d+1)\right) N_{elements} + N_{nodes} = 2d(d+1)N_{elements} + N_{nodes}.
\end{align}
Therefore, the total number of floating point operations (FLOPs) is given by
\begin{align}
   d(2d+1) N_{elements} + 2d(d+1)N_{elements} + N_{nodes}
    =
   d \left( 4 d + 3     \right)  N_{elements}  + N_{nodes}
    . \label{flop_count_one}
\end{align}
 Now, for simplicity, let us  consider a $d$-dimensional CFK triangulation. In this mesh, there are $(d+1)!$ different $d$-simplices which share an interior vertex. As a result, we have that
\begin{align}
    (d+1) N_{elements} = (d+1)! N_{nodes},
\end{align}
and furthermore
\begin{align}
N_{elements} = d! N_{nodes}.
    \label{element_id_d}    
\end{align}
Upon substituting Eq.~\eqref{element_id_d} into Eq.~\eqref{flop_count_one}, we find that the total number of FLOPs is given by
\begin{align}
   d (4d + 3)  d!  N_{nodes}  + N_{nodes} = 
    \left[  d (4d + 3)  d! + 1 \right] N_{nodes}.
    \label{cell_estimate_d}
\end{align}

%Try to express the above in terms of $N_{nodes}$

 \subsubsection{The Edge-Based Residual}

The edge-based method is implemented in a loop over edges. In particular, the following operations are performed on each edge with endpoints $\bm{p}_j$ and $\bm{p}_k$
\begin{align}
{Res}_j = {Res}_j +
\frac{1}{2} ( \flux_j+\flux_k) \cdot \bm{n}_{jk},
\quad
{Res}_k = {Res}_k -
\frac{1}{2} ( \flux_j+\flux_k) \cdot \bm{n}_{jk}.
\label{EB_linear_adv_edge_loop}
\end{align}
We note that there is an implicit scaling of the edge residual by a factor of $1/(d(d+1))$, as this appears in the definition of the directed-hyperarea vector $\bm{n}_{jk}$ (see Eq.~\eqref{vector_identity_conjecture}). Naturally, we observe a direct correspondence between this scaling and the scaling operation in Eq.~\eqref{P1_CG_linear_advection_elm_scaling}. However, the scaling in Eq.~\eqref{P1_CG_linear_advection_elm_scaling} is usually performed every iteration, whereas the scaling associated with the directed-hyperarea vectors is usually a pre-processing step.

The computational cost of Eq.~\eqref{EB_linear_adv_edge_loop} can be calculated as follows:
\begin{itemize}
    \item Compute fluxes $\flux_j$ and $\flux_k$ at the endpoints of each edge. This step requires $2d$ multiplications for linear advection problems.
    \item Add together the flux contributions from each of the 2 vertices which belong to the edge. This step requires $d$ additions.
    \item Compute the dot product of the sum of fluxes from the previous step with the directed-hyperarea vector $\bm{n}_{jk}$. This step requires $d$ multiplications and $(d-1)$ additions.
    \item Add the result to the nodes $\bm{p}_j$ and $\bm{p}_k$ of the edge. This step requires 2 additions.
\end{itemize}

The total number of additions in this process is given by
\begin{align}
    (d + (d-1) + 2)N_{edges} = (2d + 1)N_{edges},
\end{align}
and the total number of multiplications is given by
\begin{align}
    (2d + d) N_{edges} = 3d N_{edges}.
\end{align}
Therefore, the total number of FLOPs is given by
\begin{align}
     (2d + 1)N_{edges} + 3d N_{edges}=
    (5d + 1)N_{edges} 
    . \label{flop_count_two}
\end{align}
 Now, in a $d$-dimensional CFK triangulation there are a maximum of $d!$ different $d$-simplices which share an interior edge. As a result, we have
\begin{align}
    \frac{d(d+1)}{2}N_{elements} = d! N_{edges},
 \label{edge_id_d_00}
\end{align}
or by Eq.~(\ref{element_id_d}), 
\begin{align}
    \frac{d(d+1)}{2}N_{nodes} = N_{edges}.
 \label{edge_id_d}
\end{align}
Finally, upon substituting Eq.~\eqref{edge_id_d} into Eq.~\eqref{flop_count_two}, the  total number of FLOPs becomes
\begin{align}
 \frac{1}{2} d(d+1)( 5d+1) N_{nodes}.
\label{edge_estimate_d}
\end{align}

 \subsubsection{Comparison}

 Upon comparing Eqs.~\eqref{cell_estimate_d} and \eqref{edge_estimate_d}, we see that the edge-based scheme appears to require less FLOPs than the $P_1$ CG scheme by a factor of
\begin{align}
\frac{ 
 \displaystyle 
  \frac{1}{2} d(d+1)(5d+1)  }
{
 \displaystyle 
  d (4d + 3)  d! + 1    }
=
\left\{
\begin{array}{cc}
 \displaystyle  \frac{33}{45} \approx 0.733 & \mbox{for} \quad d=2, \\ [2.5ex]
 \displaystyle   \frac{96}{271}\approx 0.354  &\mbox{for} \quad  d=3, \\  [2.5ex]
 \displaystyle   \frac{210}{1825}\approx 0.115 & \mbox{for} \quad d=4, \\ 
\end{array}
\right.
\end{align}
for a fixed number of nodes $N_{nodes}$.
This verifies that the edge-based method is indeed more efficient (theoretically) than the $P_1$ CG method in 2D and 3D, and shows that it is still more efficient in 4D by a larger factor. 

The key issue is the factorial of the dimension, $d!$, that appears in the complexity of the $P_1$ CG method. This term grows faster than any polynomials of $d$. Another key difference is that the flux is computed only once per edge in the edge-based method, whereas the flux is computed $d+1$ times per element in the $P_1$ CG method. Note also that the vector $\bm{n}^T_j$ has been assumed to be precomputed and stored in the above analysis of $P_1$ CG, but it will require extra memory and pointers from elements to faces. In practice, these vectors are computed on-the-fly in order to reduce memory, and therefore, the actual complexity of $P_1$ CG may be higher. In fairness, the edge-based method is typically implemented with linear solution reconstructions and dissipation terms, requiring gradient calculations. This will increase its complexity. However, the overall complexity may be lower than that of the $P_1$ CG method because of the factorial term $d!$ that grows very rapidly. Also, the $P_1$ CG method will need its own dissipation/stabilization term. A common approach is to add a diffusion term that is discretized by the $P_1$ CG method, which will increase the complexity significantly. Fortunately, this complexity  may be reduced by employing an efficient edge-based formulation recently proposed for the $P_1$ CG discretization of diffusion and viscous terms~\cite{EBV_tetra:AIAAJ2022}.

Finally, it is important to remember that the complexity analysis above comes with some  caveats. In particular, the actual computational cost of a scheme cannot be measured merely by counting FLOPs. Rather, other factors are important, including accuracy per degree-of-freedom, stability on coarse meshes, alignment and frequency of memory accesses, and suitability for optimization on massively-parallel computing architectures. These factors make it nearly impossible to effectively determine the superior efficiency of one scheme relative to another, without taking into account problem-specific and hardware-specific considerations. Nevertheless, the complexity analysis (above) indicates that an edge-based method \emph{may} outperform a classical CG method under certain circumstances.
 
%%%%%%%%%%%%%%%%%%%%%%%%%%%%%%%%%%%%%%%%%%%%%%%%
\section{Numerical Results} \label{results_section}

\subsection{Method of Manufactured Solutions (2D)}

In this section, our goal is to evaluate the order of accuracy of a simple, 2D formulation of the edge-based method. In accordance with standard practice, we solved the following scalar, linear advection equation
\begin{align}
    \frac{\partial u}{\partial x} + \frac{\partial u}{\partial y} = f, \label{space_advection}
\end{align}
where $x$ and $y$ are the two coordinate directions, $u = u(x,y)$ is the scalar solution, and $f= f(x,y)$ is the scalar forcing function. It turns out that we can generate any desired closed-form solution to Eq.~\eqref{space_advection} by choosing the appropriate forcing function. We simply substitute the desired solution $u$ into the left hand side of Eq.~\eqref{space_advection}, evaluate the derivatives, and then substitute the resulting expression in place of the forcing function $f$ on the right hand side. This procedure is often called the `Method of Manufactured' solutions \cite{OberkampfRoy_2010}, as the desired exact solution is generated or `manufactured' by the appropriate choice of a forcing function. 

During the numerical experiments in this study, we selected the following exact-solution functions 
\begin{align}
    u_1 &= x^2 + xy + y^2, \\[1.0ex]
    u_2 &= 3x^2 + 5y^2, \\[1.0ex]
    u_3 &= \exp\left[k(x+y)\right], 
\end{align}
where the constant $k = 0.1$ was chosen for convenience. We can observe that the first function (above) is a symmetric, quadratic polynomial, the second is a non-symmetric quadratic polynomial, and the third is a non-symmetric transcendental function. These functions were chosen due to their distinct and varied features, which enabled us to evaluate the edge-based  method over a wide range of conditions. The corresponding forcing functions for these exact solutions, as derived from Eq.~\eqref{space_advection}, are as follows
\begin{align}
    f_1 &= 3(x+y), \label{forced_one} \\[1.0ex] 
    f_2 &= 6x + 10y, \\[1.0ex]
    f_3 &= 2k \exp\left[k(x+y)\right]. \label{forced_three}
\end{align}
The computational domain for our study was a square: $\Omega = [0,1]^2$. Dirichlet boundary conditions were enforced on each side of the square, and the exact solutions were used to supply the boundary values. In addition, the numerical solutions inside of the square were initialized using the exact solutions. Regular triangular meshes were used to discretize the square. These meshes were generated using the open-source meshing software GMSH~\cite{geuzaine2009gmsh}. Table~\ref{mesh_info} lists the six meshes we tested, and the number of  triangular elements in each mesh. Each successive mesh level quadruples the number of cells relative to the previous level, with 4 cells on the coarsest mesh and over 4,000 cells on the finest. Figure~\ref{fig:meshing} displays four of the six meshes that were used in our study. In addition, Figure~\ref{fig:contours_2d} shows contours of the numerical solutions for the test cases with exact solutions of $u_1$, $u_2$, and $u_3$.
\begin{table}[h!]
\centering
\begin{tabular}{| c | l | l |}
    \hline
    Mesh No. ($n$) & No.~of Cells & No.~of Nodes \\\hline\hline
    0 & 4 & 5 \\\hline
    1 & 16 & 13 \\\hline
    2 & 64 & 41  \\\hline
    3 & 256 & 145  \\\hline
    4 & 1,024 & 545 \\\hline
    5 & 4,096 & 2,113 \\\hline
\end{tabular}
\caption{Family of structured triangular meshes used in the 2D experiments.}
\label{mesh_info}
\end{table}

\begin{figure}[h!]
    \centering
    \includegraphics[width=6cm]{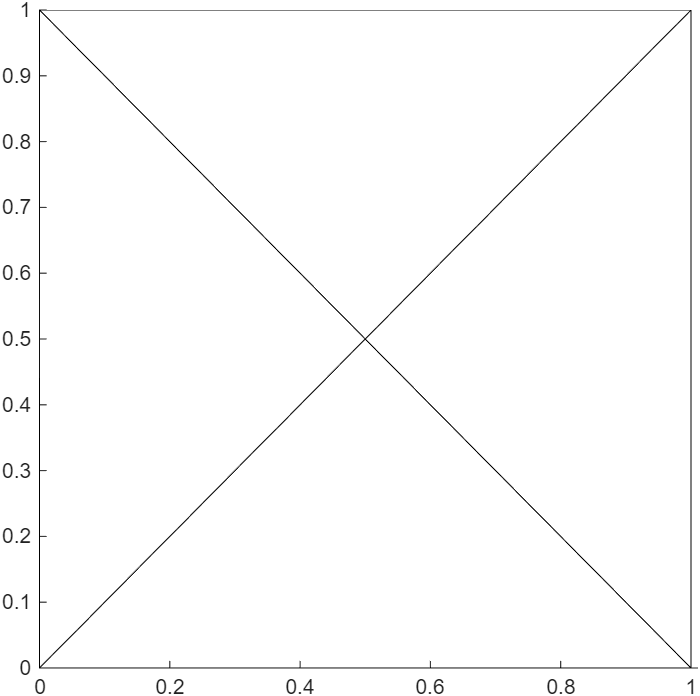} \hspace{1cm}
    \includegraphics[width = 6cm]{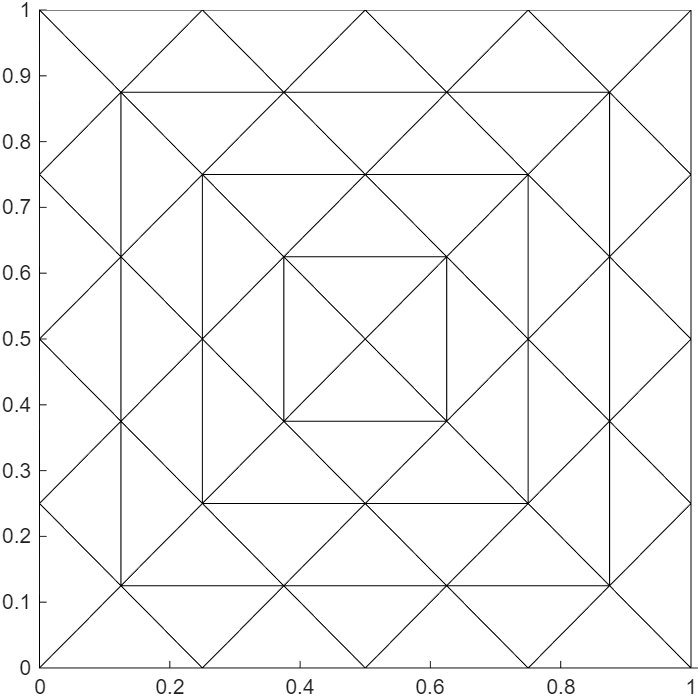} \\[5.0ex]
    \includegraphics[width = 6cm]{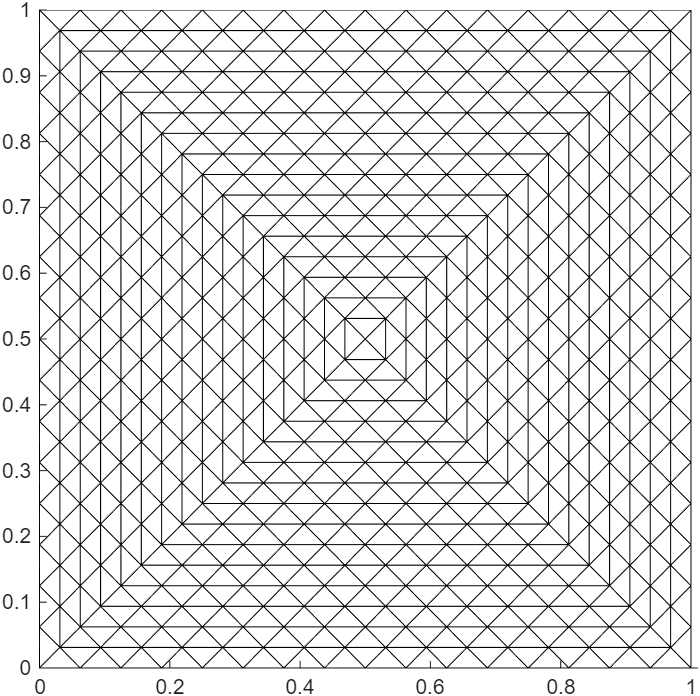} \hspace{1cm}
    \includegraphics[width=6cm]{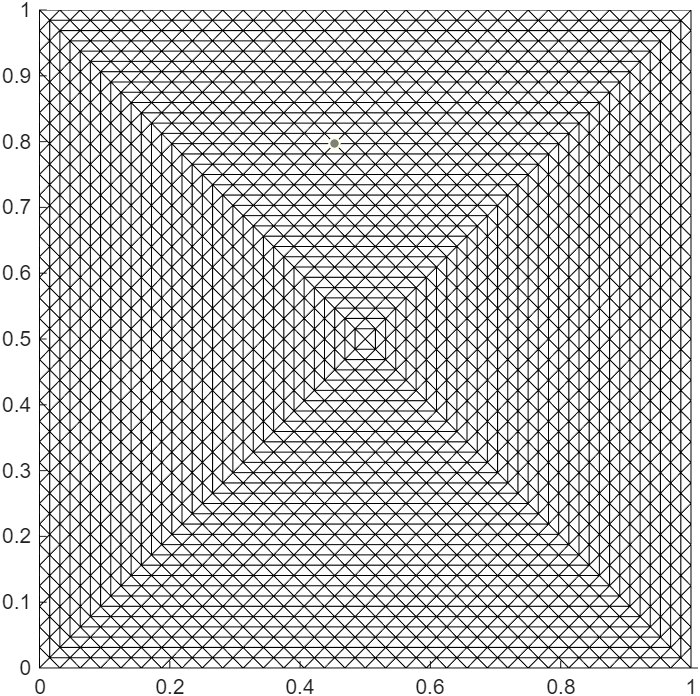}
    \caption{Images of four structured triangular meshes: mesh 0 (top left), mesh 2 (top right), mesh 4 (bottom left), and mesh 5 (bottom right) used in the 2D  experiments.}
    \label{fig:meshing}
\end{figure}

\begin{figure}[h!]
    \centering
    \includegraphics[width=7cm]{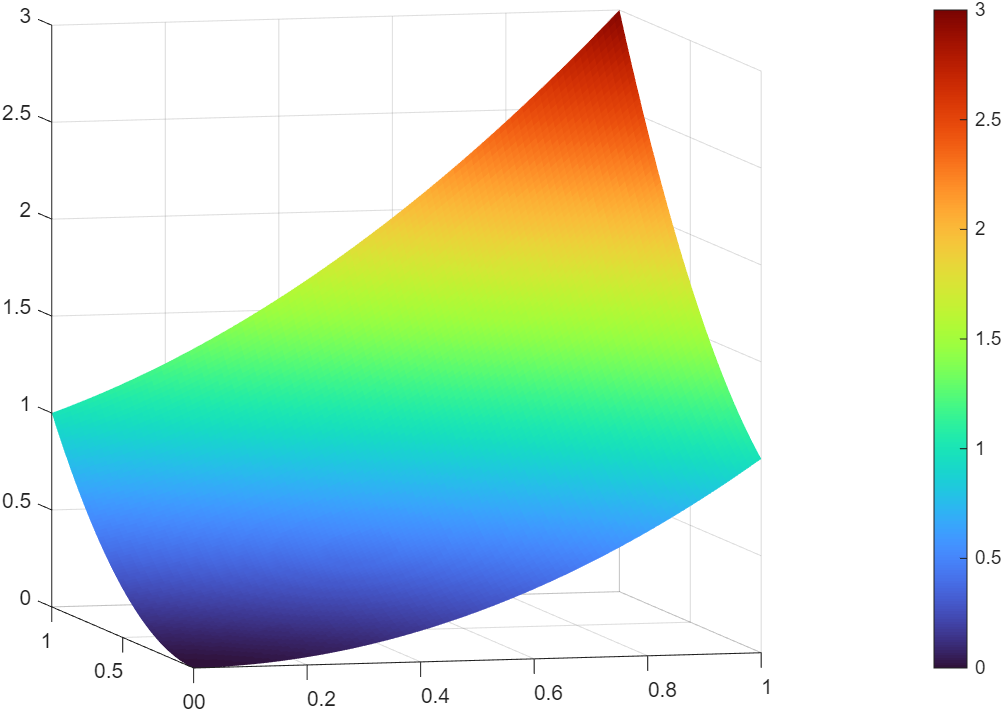} \hspace{1cm}
    \includegraphics[width=7cm]{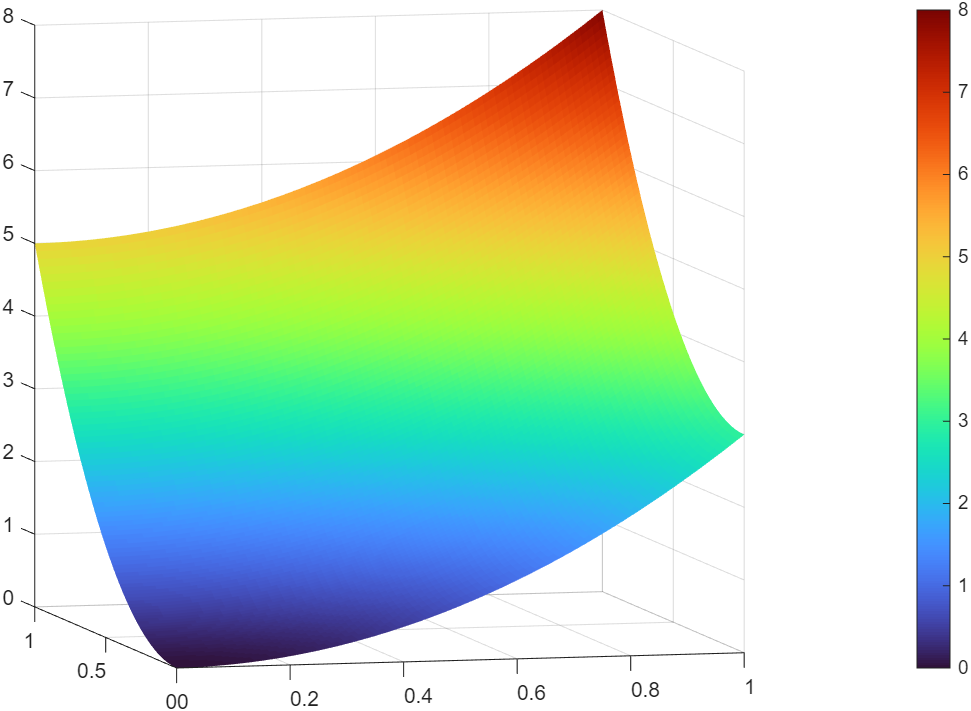} \\[5.0ex]
    \includegraphics[width=7cm]{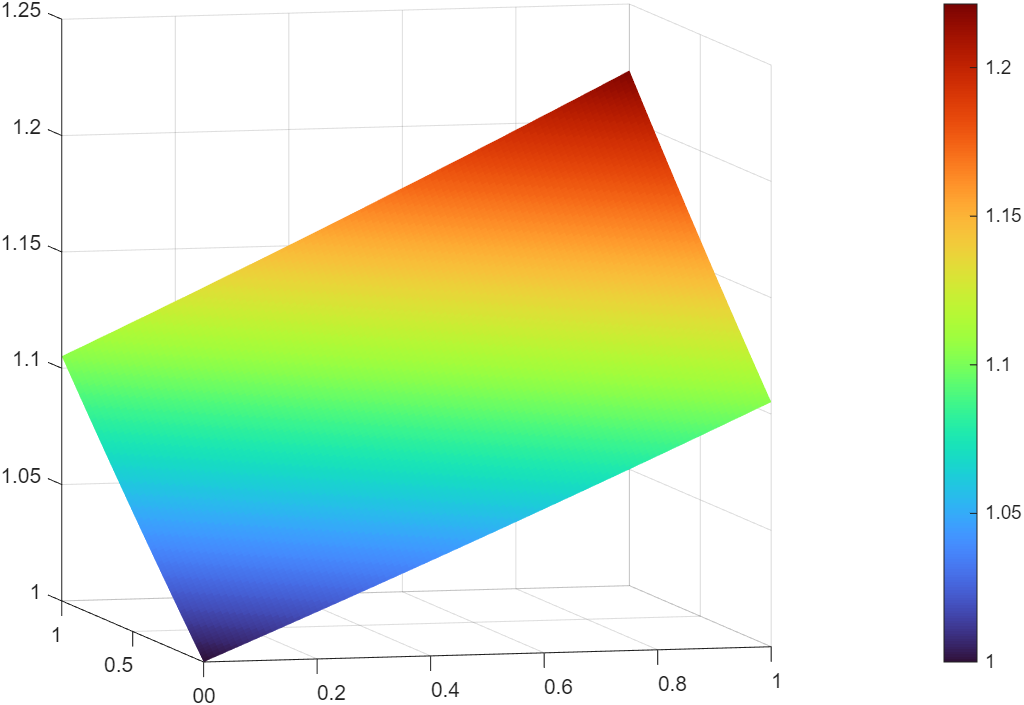}
    \caption{Contour plots of numerical solutions for $u_1$ (top left), $u_2$ (top right), and $u_3$ (bottom) on the 2D mesh with $n = 5$.}
    \label{fig:contours_2d}
\end{figure}
To assess the convergence of our edge-based method, we tested how the error between the exact solution and the numerical solution behaved as we refined the mesh. 
%To quantify error for a given exact solution and forcing function, we looped through all the nodes in each mesh and compared the numerical method's predicted value $u_h$ with the exact solution $u$.
We computed the maximum nodal error by computing the absolute error at each node in the mesh, and thereafter taking the maximum value over all nodes.
%
% \begin{align}
%     \text{error} &= |U_{exact}[i] - U[i] |
% \end{align}
%
Once the maximum nodal error was computed on each mesh, it was tabulated alongside the characteristic element-size  $h$, associated with each mesh. The value of $h$ was computed as $h=\frac{1}{2^{n}}$ where $n$ is the mesh number. The results appear in Table~\ref{tab:errortable}.  For all three cases, the numerical solutions converged to 2nd-order accuracy, with only a few minor deviations. 
\begin{table}[h!]
\centering
%\begin{tabular}{|*{9}{c|}}
\begin{tabular}{|l|c|c|c|c|c|c|}
\hline
     -- & \multicolumn{2}{|c}{Symmetric Quadratic} &  \multicolumn{2}{|c|}{Quadratic} & \multicolumn{2}{|c|}{Exponential} \\ \hline\hline
    Mesh Size ($h$)  & Maximum Error & Slope & Maximum Error & Slope & Maximum Error & Slope \\\hline
    1.0  & 5.49e-1 & -- & 2.00e0 & -- & 3.10e-3 & -- \\\hline
    0.5  & 1.41e-1 & 1.97  & 5.42e-1 & 1.88 & 7.92e-4 & 1.97 \\\hline
    0.25  & 3.78e-2 & 1.90  & 1.47e-1 & 1.88 & 2.14e-4 & 1.89 \\\hline
    0.125  & 9.00e-3 & 2.07 &  3.61e-2 & 2.03 & 5.00e-5 & 2.10 \\\hline
    0.0625  & 2.30e-3 & 1.97 &  9.37e-3 & 1.95 & 1.28e-5 & 1.97 \\\hline
    0.03125  & 5.95e-4 & 1.95 &  2.54e-3 & 1.88 & 3.30e-6 & 1.96 \\\hline
\end{tabular}
\caption{Order of accuracy results for the 2D experiments.}
\label{tab:errortable}
\end{table}

\pagebreak

\subsection{Method of Manufactured Solutions (3D)}

In a similar fashion, we evaluated the order of accuracy for our 3D edge-based method. Following the approach taken in the 2D section, we solved a linear scalar advection equation in accordance with the Method of Manufactured Solutions. We considered the equation
\begin{align}
    \frac{\partial u}{\partial x} +\frac{\partial u}{\partial y} + \frac{\partial u}{\partial z} = f,
\end{align}
where $x$, $y$, and $z$ are the three coordinate directions, $u = u(x,y,z)$ is the scalar solution, and $f = f(x,y,z)$ is the scalar forcing function. For our 3D study, we considered the following exact solutions
\begin{align}
    u_1 &= 1 + x^2 + y^2 + z^2 + xy  + xz + yz, \\
    u_2 &= 1+ x^2 + yz, \\
    u_3 &= 1 + \exp \left[k(x+y+z)\right],
\end{align}
where $k=0.1$. We note that the first function (above) is a symmetric quadratic polynomial, the second function is a non-symmetric quadratic polynomial, and the third function is a non-symmetric transcendental function. \\
The exact-solution functions were manufactured from the following forcing functions
\begin{align}
    f_1 &= 4(x+y+z), \\
    f_2 &= 2x + y + z, \\
    f_3 &= 3k\exp \left[k(x+y+z)\right].
\end{align}
The computational domain was $ \Omega = [0,1]^3 $. The exact solutions were prescribed as Dirichlet boundary conditions on all boundaries of $\Omega$. Furthermore, the solution on the interior of the computational domain was initialized with the exact solution. The domain was tessellated with a uniform mesh of tetrahedral elements using the CFK decomposition. Through this decomposition, the initial domain was divided into $N^3$ cubic elements, and then each cube was subdivided into $6$ tetrahedral elements. Altogether, each mesh contained $6 N^3$ tetrahedral elements. We obtained numerical results for meshes with $N = $ 7, 11, 15, 19, and 23. The details of these meshes are summarized in Table~\ref{mesh_info_3D}.
\begin{table}[h!]
\centering
\begin{tabular}{| c | l | l |}
    \hline
    $N$ & No.~of Cells & No.~of Nodes \\\hline\hline
    7 & 2,058 & 512 \\\hline
    11 & 7,986 & 1,728 \\\hline
    15 & 20,250 & 4,096 \\\hline
    19 & 41,154 & 8,000 \\\hline
    23 & 73,002 & 13,824 \\\hline
\end{tabular}
\caption{Family of structured tetrahedral meshes used in the 3D experiments.}
\label{mesh_info_3D}
\end{table}

Figure~\ref{fig:contours_3d} depicts contours of the numerical solutions for the three test cases with $u_1$, $u_2$, and $u_3$. The maximum nodal errors on each of the five meshes are summarized in Table \ref{tab:errortable_3D}. From these results, it is evident that second-order accuracy is achieved.

\begin{figure}[h!]
    \centering
    \includegraphics[width=7cm]{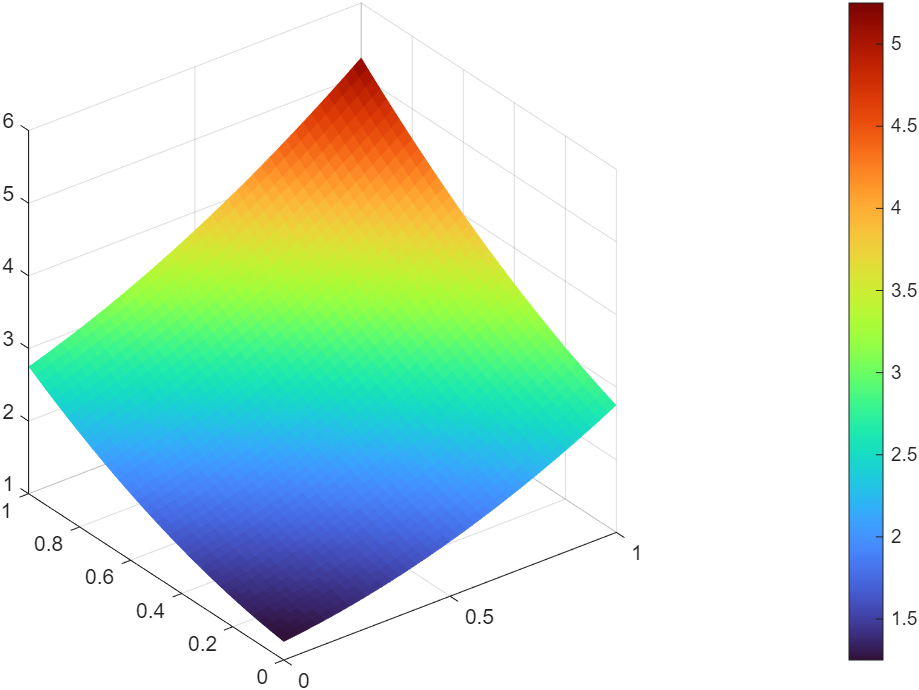} \hspace{1cm}
    \includegraphics[width=7cm]{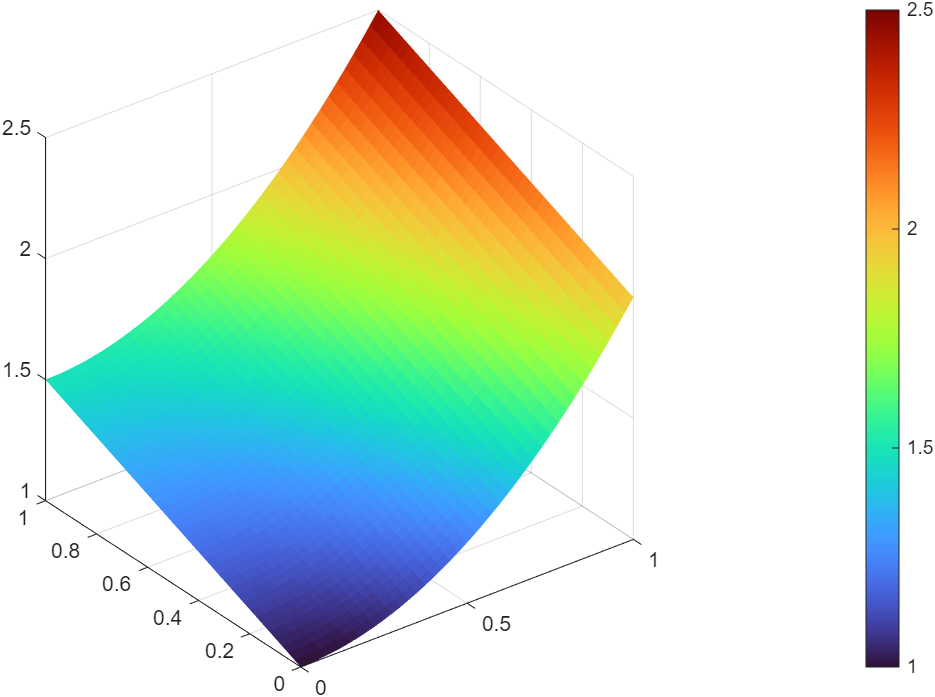} \\[5.0ex]
    \includegraphics[width=7cm]{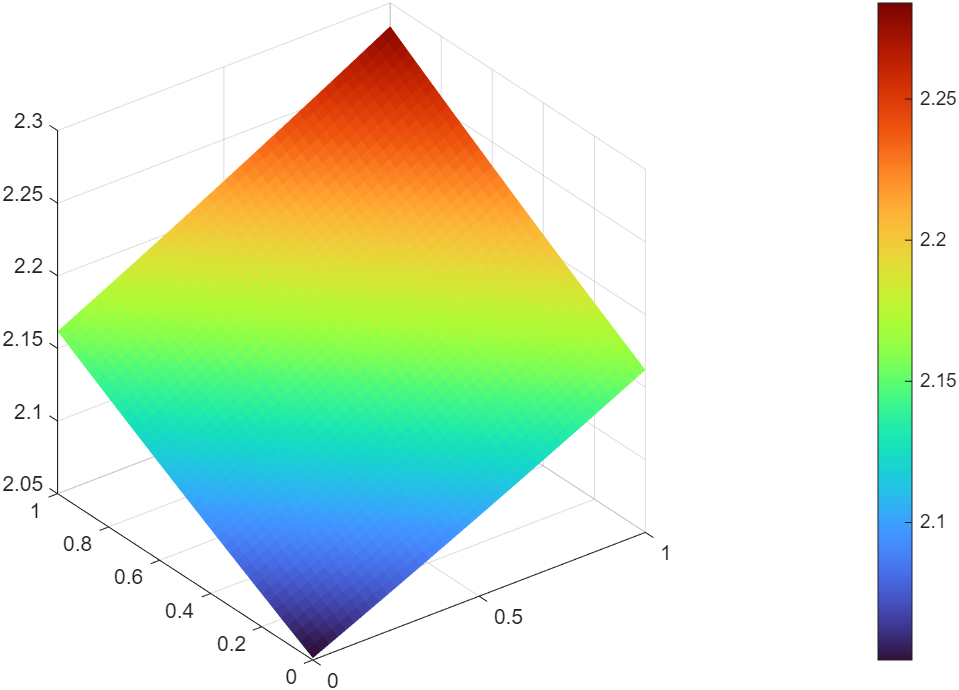}
    \caption{Contour plots of numerical solutions for $u_1$ (top left), $u_2$ (top right), and $u_3$ (bottom) on the 3D mesh with $N = 23$. The plots were created based on a 2D cross-section of the 3D domain taken at $0 \leq x \leq 1$, $0 \leq y \leq 1$, and $z = 0.5$.}
    \label{fig:contours_3d}
\end{figure}

\begin{table}[h!]
\centering
%\begin{tabular}{|*{9}{c|}}
\begin{tabular}{|l|c|c|c|c|c|c|c|c|}
\hline
     -- & \multicolumn{2}{|c}{Symmetric Quadratic} & \multicolumn{2}{|c}{Quadratic} & \multicolumn{2}{|c|}{Exponential} \\ \hline\hline
    Mesh Size ($h$)  & Maximum Error & Slope & Maximum Error & Slope & Maximum Error & Slope \\\hline
    0.5 & 5.49e-2 & -- & 1.57e-2 & -- & 2.38e-4 & -- \\\hline	
0.25	& 2.26e-2 & 1.96 & 7.71e-3 & 1.58 & 1.10e-4 & 1.71 \\\hline
0.125	& 1.21e-2 & 2.02 & 4.34e-3 & 1.85 & 6.17e-5 & 1.87 \\\hline
0.0625	& 7.52e-3 & 2.01 & 2.73e-3 & 1.97 & 3.88e-5 & 1.97 \\\hline
0.04167	& 5.12e-3 & 2.01 & 1.86e-3 & 1.99 & 2.66e-6 & 1.97 \\\hline
\end{tabular}
\caption{Order of accuracy results for the  3D experiments.}
\label{tab:errortable_3D}
\end{table}

\pagebreak

\subsection{Method of Manufactured Solutions (4D)}

Lastly, we performed numerical tests to determine the order of accuracy for our 4D edge-based method. Towards this end, we followed the approach of the previous section, and solved a scalar, linear advection equation 
\begin{align}
    \frac{\partial u}{\partial x} + \frac{\partial u}{\partial y} + \frac{\partial u}{\partial z} + \frac{\partial u}{\partial w}  = f, \label{space_time_advection}
\end{align}
where $x$, $y$, $z$, and $w$ are the four coordinate directions, $u = u(x,y,z,w)$ is the scalar solution, and $f = f(x,y,z,w)$ is the scalar forcing function. We considered the following exact solutions
\begin{align}
    u_1 &= x^2 + y^2 + z^2 + w^2 + xy + xz + xw + yz + yw + zw, \label{exact_one} \\[1.0ex]
    u_2 &= x^2 + 2 y^2 + 3 z^2 + 4 w^2, \\[1.0ex]
    u_3 &= \exp\left[k(x+y+z+w)\right], \label{exact_three}
\end{align}
where $k = 0.025$. We note that the first function (above) is a symmetric quadratic polynomial, the second function is a non-symmetric quadratic polynomial, and the last function is a non-symmetric transcendental function. The associated forcing functions are
\begin{align}
    f_1 &= 5(x+y+z+w), \label{forcing_one} \\[1.0ex]
    f_2 &= 2x + 4y + 6z + 8 w, \\[1.0ex]
    f_3 &= 4 k \exp\left[k(x+y+z+w)\right]. \label{forcing_three}
\end{align}
The computational domain was $\Omega = [0,1]^4$. The exact solutions (Eqs.~\eqref{exact_one}--\eqref{exact_three}) were prescribed as Dirichlet boundary conditions on all boundaries of $\Omega$, and the interior of the domain was initialized with the same functions. The domain was tessellated with a uniform mesh of 4-simplex (pentatope) elements using the CFK decomposition. Here, the initial domain was subdivided into $N^4$ tesseract elements, and each tesseract was subdivided into $24$ conforming pentatope elements. The final meshes contained a total of $24 N^4$ pentatope elements. We obtained our numerical results on meshes with $N = 2$, 4, 8, 16, and~24. The mesh properties are summarized in Table~\ref{mesh_info_4D}. %In a natural fashion, our objective was to obtain numerical approximations of the exact solutions: $u_1$, $u_2$, and $u_3$.
Figure~\ref{fig:contours_4d} shows contours of the numerical solutions for all three test cases. The maximum nodal errors for each case are shown in Table~\ref{tab:errortable_4D}. Here, it is clear that the expected 2nd-order accuracy is obtained. 
\begin{table}[h!]
\centering
\begin{tabular}{| c | l | l |}
    \hline
    $N$ & No.~of Cells & No.~of Nodes \\\hline\hline
    2 & 384 & 81 \\\hline
    4 & 6,144 & 625 \\\hline
    8 & 98,304 & 6,561 \\\hline
    16 & 1,572,864 & 83,521 \\\hline
    24 & 7,962,624 & 390,625 \\\hline
\end{tabular}
\caption{Family of structured pentatopic meshes used in the 4D experiments.}
\label{mesh_info_4D}
\end{table}

\begin{figure}[h!]
    \centering
    \includegraphics[width=7cm]{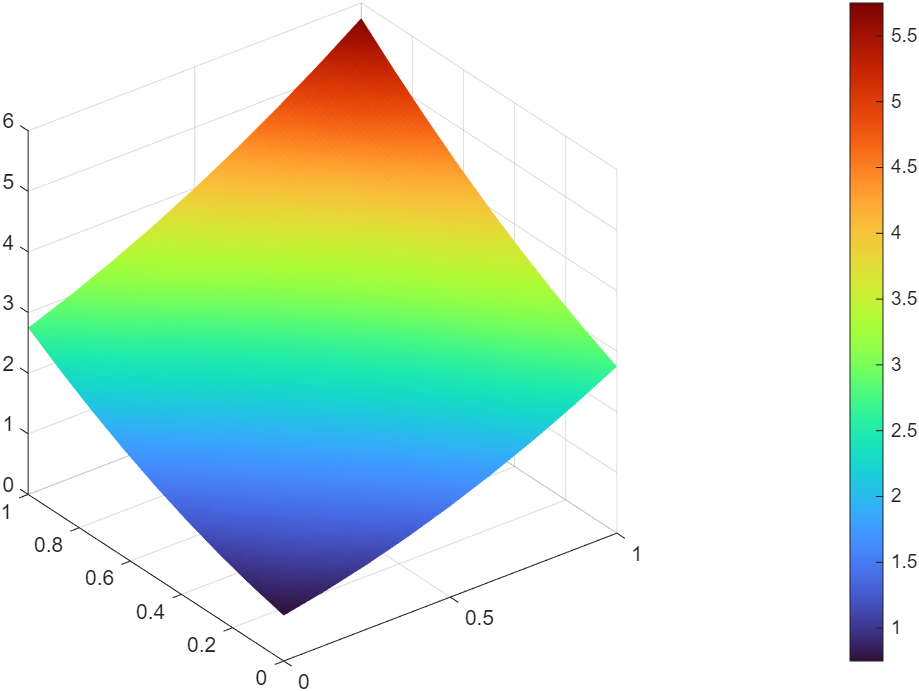} \hspace{1cm}
    \includegraphics[width=7cm]{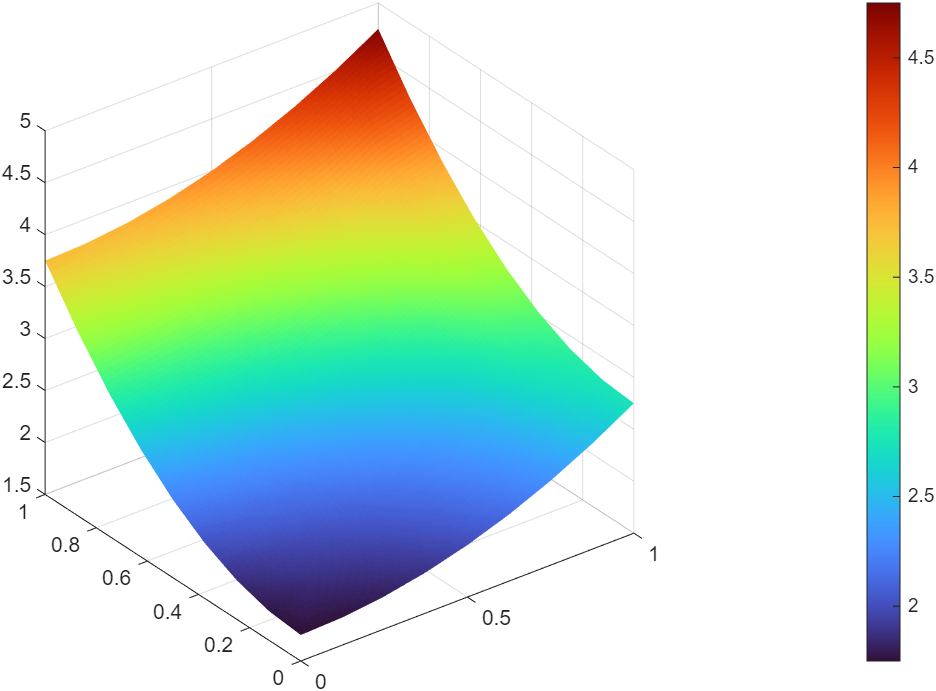} \\[5.0ex]
    \includegraphics[width=7cm]{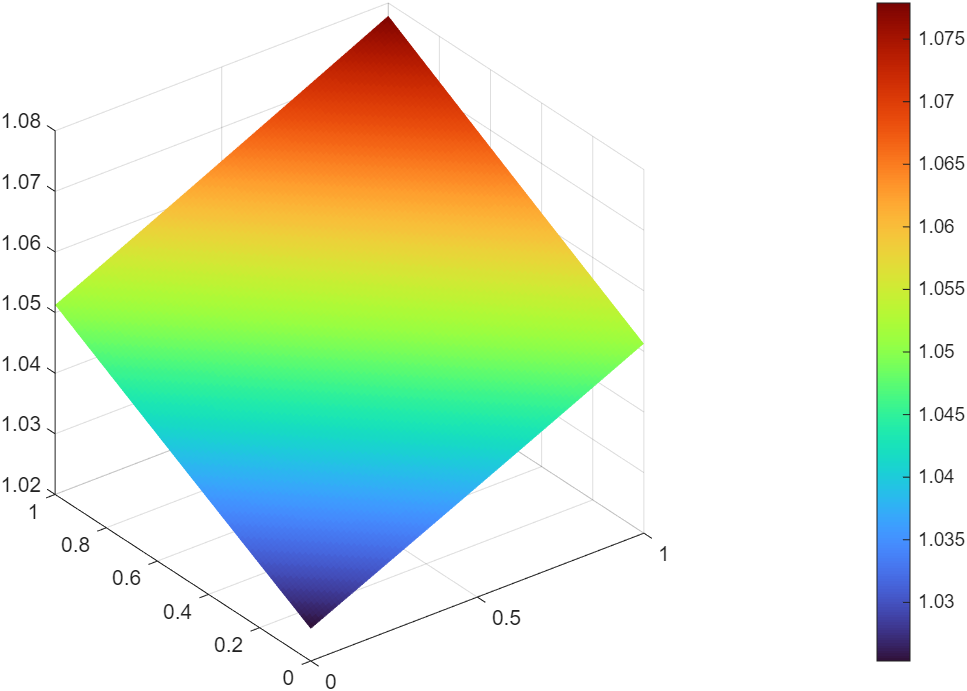}
    \caption{Contour plots of numerical solutions for $u_1$ (top left), $u_2$ (top right), and $u_3$ (bottom) on the structured 4D mesh with $N = 8$. The plots were created based on a 2D cross-section of the 4D domain taken at $0 \leq x \leq 1$, $0 \leq y \leq 1$, $z = 0.5$, and $w = 0.5$.}
    \label{fig:contours_4d}
\end{figure}

\begin{table}[h!]
\centering
%\begin{tabular}{|*{9}{c|}}
\begin{tabular}{|l|c|c|c|c|c|c|c|c|}
\hline
     -- & \multicolumn{2}{|c}{Symmetric Quadratic} & \multicolumn{2}{|c}{Quadratic} & \multicolumn{2}{|c|}{Exponential} \\ \hline\hline
    Mesh Size ($h$)  & Maximum Error & Slope & Maximum Error & Slope & Maximum Error & Slope \\\hline
    0.5 & 8.12e-1 & -- & 1.09e+0 & -- & 3.82e-4 & -- \\\hline	
0.25	& 4.19e-1 & 0.95 & 5.55e-1 & 0.98 & 1.99e-4 & 0.94 \\\hline
0.125	& 1.07e-1 & 1.97 & 1.41e-1 & 1.97 & 5.09e-5 & 1.97 \\\hline
0.0625	& 2.65e-2 & 2.01 & 3.49e-2 & 2.02 & 1.26e-5 & 2.02 \\\hline
0.04167	& 1.17e-2 & 2.02 & 1.54e-2 & 2.02 & 5.56e-6 & 2.02 \\\hline
\end{tabular}
\caption{Order of accuracy results for the  4D experiments on structured meshes.}
\label{tab:errortable_4D}
\end{table}

For the sake of completeness, we repeated the numerical tests with the forcing function $f_3$ on a family of \emph{unstructured}, Delaunay meshes. These meshes were created by taking the point sets from the structured meshes, randomly perturbing the interior points in each direction by  distances less-than-or-equal-to $0.2h$, and then using the perturbed points to create unstructured Delaunay meshes via the Bowyer-Watson algorithm~\cite{bowyer1981computing,watson1981computing}. The properties of the resulting unstructured meshes are summarized in Table~\ref{mesh_info_4D_unstruct}.
\begin{table}[h!]
\centering
\begin{tabular}{| c | l | l |}
    \hline
    $N$ & No.~of Cells & No.~of Nodes \\\hline\hline
    4 & 7,946 & 625 \\\hline
    8 & 133,536  & 6,561 \\\hline
    16 & 2,149,642 & 83,521 \\\hline
    24 & 10,894,264 & 390,625 \\\hline
\end{tabular}
\caption{Family of unstructured pentatopic meshes used in the 4D experiments.}
\label{mesh_info_4D_unstruct}
\end{table}
Figure~\ref{fig:contours_4d_unstruct} shows a contour plot of the numerical solution on the mesh with $N = 8$, and Table~\ref{tab:errortable_4D_unstruct} shows the maximum nodal errors. Again, the expected 2nd-order accuracy is obtained.

\begin{figure}[h!]
    \centering
    \includegraphics[width=7cm]{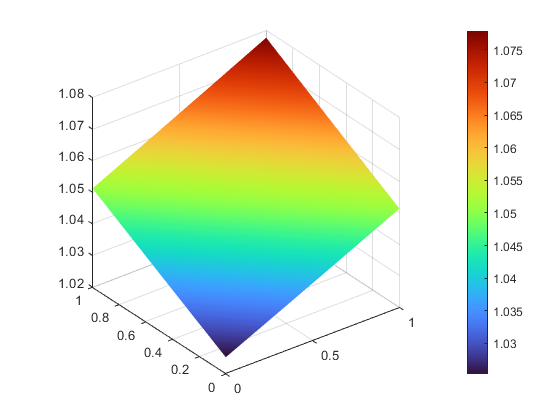} 
    \caption{Contour plots of the numerical solution for $u_3$ on the unstructured 4D mesh with $N = 8$. The plot was created based on a 2D cross-section of the 4D domain taken at $0 \leq x \leq 1$, $0 \leq y \leq 1$, $z = 0.5$, and $w = 0.5$.}
    \label{fig:contours_4d_unstruct}
\end{figure}

\begin{table}[h!]
\centering
\begin{tabular}{|l|c|c|c|c|c|c|c|c|}
\hline
    Mesh Size ($h$)  & Maximum Error & Slope  \\\hline
0.25	& 8.19e-5 & --  \\\hline
0.125	& 2.42e-5 & 1.76 \\\hline
0.0625	& 6.70e-6 & 1.85  \\\hline
0.04167	& 3.01e-6 & 1.97  \\\hline
\end{tabular}
\caption{Order of accuracy results for the 4D experiments on unstructured meshes.}
\label{tab:errortable_4D_unstruct}
\end{table}

\pagebreak

\section{Conclusion} \label{conclusion_section}

In this paper, we provide important geometric formulas for node-centered, edge-based methods on triangulations in $\mathbb{R}^d$. These formulas allow us to compute two key relationships in the median-dual tessellation: i) a relationship between the hypervolume of each median-dual region and the directed-hyperarea vectors, (see Theorem~\ref{hypervolume_theorem}); and ii) a relationship between the directed-hyperarea vectors and the face normals of the associated simplices, (see Conjecture~\ref{directed_hyperarea_conjecture} and  Theorem~\ref{normal_vector_identity_theorem}). The latter relationship holds for all cases in which $d \leq 4$. Our formulas enable us to create a node-centered, edge-based scheme which only stores and operates on edge data in the form of directed-hyperarea vectors---although some limited face data is required for boundary corrections. In particular, we can compute the directed-hyperarea vectors for each edge as a pre-processing step, and then readily calculate the hypervolume of each median-dual region. As a result, computing the residual at each interior node is simply a matter of evaluating the appropriate solution and numerical flux functions (and accumulating any forcing-function contributions), and accessing the corresponding pre-computed directed-hyperarea vectors for each edge.  Our complexity analysis demonstrates that this process has the potential to be computationally inexpensive, relative to classical cell-based methods. Finally, we verified that our edge-based method achieves 2nd-order accuracy in both 2D and 3D, and also, for the first time, demonstrated 2nd-order accuracy in 4D.

Remark~\ref{boundary_remark_03} suggests an explicit method for looping over boundary elements, and enforcing the necessary boundary conditions. However, accuracy-preserving boundary flux quadrature formulas still need to be derived for a node-centered edge-based scheme to achieve second-order and third-order accuracy at boundary nodes in 4D. In addition, there is still interest in extending the proof of Theorem~\ref{normal_vector_identity_theorem} to $d > 4$, and proving Conjecture~\ref{directed_hyperarea_conjecture}. Such endeavors would be of great practical significance for higher-dimensional problems, for example, those in the field of radiation transport. 

\bmsection*{Acknowledgments}

The first and second authors have been supported by the National Institute of Aerospace (NIA) under grant X24-801007-PSU.

\bmsection*{Author Contributions (CRediT)}
\textbf{N. Tufillaro}: Writing – original draft, Writing – review and editing, Visualization, Methodology. \textbf{D. M. Williams}: Writing – original draft, Writing – review and editing, Conceptualization, Visualization, Formal
analysis, Supervision, Funding acquisition. \textbf{N. Nishikawa}: Writing – original draft, Writing – review and editing, Conceptualization, Formal
analysis, Methodology, Supervision, Funding acquisition.

\bmsection*{Data Availability Statement}

The data that support the findings of this study are available from the corresponding author upon reasonable request.

% \bmsection*{Financial disclosure}

% None reported.

% \bmsection*{Conflict of interest}

% The authors declare no potential conflict of interests.

\bibliography{lipics-v2021-sample-article}

@Article{EBV_tetra:AIAAJ2022,
     author = {E. Padway and H. Nishikawa},
  title = 	 {Edge-Based Viscous Method for Node-Centered Finite-Volume Formulation on Tetrahedra},
  journal = AIAAJ,
     year = 2022,
   volume = 60,
   number = 12,
    month = {December},
    pages = {6910-6925}
}

@article{Behr_2008IJNMF,
 author = {M. Behr},
 title  = {Simplex space–time meshes in finite element simulations},
 journal= {International Journal for Numerical Methods in Fluids},
 year   = 2008,
 volume  = 57,
 issue = 9,
 pages = {1421-1434}
}

@article{MasudHughes_1997CMAME,
 author = {A. Masud and T. {J}. {R}. Hughes},
 title  = {A space-time {G}alerkin/least-squares finite element formulation of the {N}avier-{S}tokes equations for moving domain problems},
 journal= {Comput. Methods Appl. Mech. Engrg.},
 year   = 1997,
 volume  = 146,
 pages = {91-126}
}

@article{Nishikawa_Proof_EB_equiv_P1Galerkin,
 author = {H. Nishikawa},
 title  = {A Note on Galerkin and Edge-Based Discretizations},
 journal= {Unpublished},
 year   = 2016,
 month  = October,
 doi    = {10.13140/RG.2.2.11939.53285}
}

@InProceedings{nishikawa_aiaa2017-4295,
     author = {H. Nishikawa},
  title = 	 {Uses of Zero and Negative Volume Elements for Node-Centered Edge-Based Discretization},
  booktitle = 	 {23rd {AIAA} Computational Fluid Dynamics Conference},
  series =	 {{AIAA} Paper 2017-4295},
  address = {Denver, Colorado},
  organization = {AIAA},
  year =	 {2017}
  }

@Article{nishikawa_centroid:JCP2020,
  author = 	 {H. Nishikawa},
  title = 	 {A face-area-weighted centroid formula for finite-volume method that improves skewness and convergence on triangular grids},
    journal = JCP,
     year = 2020,
   volume = {401},
    pages = {109001}
}

@Book{OberkampfRoy_2010,
  author =	 {W. L. Oberkampf and C. J. Roy},
  title = 	 {Verification and validation in scientific computing},
  publisher = 	 {Cambridge University Press},
  year = 	 2010
}

@InProceedings{Nishikawa:ICCFD2024,
  author =  {H. Nishikawa},
  title = 	 {A third-order hyperbolic {N}avier-{S}tokes solver for unsteady simulations on adaptive space-time tetrahedral grids},
  type =	 {{ICCFD12}-4-C-03},
  booktitle = 	{Twelfth International Conference on Computational Fluid Dynamics},
  address = {Kobe, Japan},
  year = 2024,
  organization={ICCFD}
}

@InProceedings{Nishikawa_aiaa2020-1786,
     author = {H. Nishikawa},
  title = 	 {A face-area-weighted centroid formula for reducing
grid skewness and improving convergence of
edge-based solver on highly-skewed simplex grids},
  booktitle = 	 {AIAA Scitech 2020 Forum},
  series =	 {{AIAA} Paper 2020-1786},
  address = {Orlando, FL},
  year =	 {2020},
  organization={American Institute of Aeronautics and Astronautics}
  }

@InProceedings{nishikawa:AIAA2010,
  Author =  {H. Nishikawa},
  title =  {Beyond interface gradient: A general principle for constructing diffusion schemes},
  booktitle = {40th {AIAA} Fluid Dynamics Conference and Exhibit},
  series = {{AIAA} Paper 2010-5093},
  address = {Chicago, IL},
  year =	 2010,
  organization={American Institute of Aeronautics and Astronautics}
}

@InProceedings{barth_AIAA1991,
  author = {T. J. Barth},
  title =  {Numerical aspects of computing viscous high {R}eynolds number flows on unstructured meshes},
  series = {{AIAA} Paper 91-0721},
  year = 1991,
  organization={American Institute of Aeronautics and Astronautics}
}

@Article{fun3d_manual:NASA20240006306,
  author = { W. {K} Anderson and R. T. Biedron and J.-{R}. Carlson and J. M. Derlaga and B. Diskin and C. {T}. Druypr {J}r. and P. A. Gnoffo and 
D. P. Hammond and K. {E} Jacobson and W. T. Jones and B. Kleb and E. M. Lee-Rausch and Y. Liu and G. {C}. Nastac and E. {J}. Nielsen and E. {M}. Padway and M. {A}. Park and C. L. Rumsey and J. L. Thomas and K. {B}. Thompson and A. Walden and L. Wang and S. {L}. Wood and W. A. Wood and X. Zhang},
  title =  { {FUN3D} Manual: 14.1},
  journal = {NASA--NASA/TM-–20240006306},
     year = 	2024,
  month =	{June}
}

@inproceedings{nakashima_watanabe_nishikawa:Japan2014,
  author = 	 {Y. Nakashima and N. Watanabe and H. Nishikawa},
  title = 	 {Development of an effective implicit solver for general-purpose unstructured {CFD} software},
  booktitle = 	 {The 28th Computational Fluid Dynamics Symposium},
  series =	 {C08-1},
  year =	 {2014},
  address = {Tokyo, Japan},
organization={Japan Society of Fluid Dynamics}
}

@article{dlr-tau-digital-x,
 author = {N. Kroll and M. Abu-Zurayk and D. Dimitrov and T. Franz and
           T. F\"uhrer and T. Gerhold and S. G\"ortz and R. Heinrich and
	   C. Ilic and J. Jepsen and J. J\"agersk\"upper and M. Kruse and
	   A. Krumbein and S. Langer and D. Liu and R. Liepelt and
	   L. Reimer and M. Ritter and A. Schw\"oppe and J. Scherer and
	   F. Spiering and R. Thormann and V. Togiti and D. Vollmer and
	   J.-H. Wendisch},
 title  = {{DLR} {Project} {Digital}-{X}: {Towards} virtual aircraft design and flight testing based on high-fidelity methods},
 journal= {CEAS Aeronautical Journal},
 volume = 7,
 number = 1,
 pages  = {3--27},
 year   = 2016,
 month  = mar,
 doi    = {10.1007/s13272-015-0179-7}
}

@InProceedings{mavriplis_long:AIAA2010,
  author = 	 {D. J.  Mavriplis and M. Long},
  title = 	 {{NSU3D} Results for the fourth {AIAA} drag prediction workshop},
  series =	 {{AIAA} Paper 2010-4363},
  year =	 2010,
  organization={American Institute of Aeronautics and Astronautics}
}

@InProceedings{Luo_Baum_Lohner:AIAA2004-1103,
  author = 	 {H. Luo and J. D. Baum and R. L\"{o}hner},
  title = 	 {High-Reynolds number viscous flow computations using an unstructured-grid method},
  booktitle = 	 {42nd AIAA Aerospace Sciences Meeting},
  series =	 {{AIAA} Paper 2004-1103},
  address = {Reno, NV},
  year =	 2004,
  organization={American Institute of Aeronautics and Astronautics}
}

@InProceedings{KozubskayaAbalakinDervieux:AIAA2009,
  author = 	 {T. Kozubskaya and I. Abalakin and A. Dervieux and H. Ouvrard},
  title = 	 {Accuracy improvement for finite-volume vertex-centered schemes solving aeroacoustics problems on unstructured meshes},
  booktitle = 	 {16th {AIAA/CEAS} Aeroacoustics Conference},
  series =	 {{AIAA} Paper 2010-3933},
  year =	 2010,
  organization={American Institute of Aeronautics and Astronautics}
}

@Article{Haselbacher_Blazek_AIAAJ2000,
  author = 	 {A. Haselbacher and J. Blazek},
  title = 	 {Accurate and efficient discretization of {N}avier-{S}tokes
equations on mixed grids},
  journal = {{AIAA} Journal},
  year = 	 2000,
  volume =	 38,
  number =	 11,
  pages =	 {2094-2102},
  url = {http://arc.aiaa.org/doi/abs/10.2514/2.871}
}

@InProceedings{sierra-primo:AIAA2002,
  author = 	 {T. Smith and C. Ober and A. Lorber},
  title = 	 { {SIERRA}/{P}remo-{A} New general purpose compressible flow simulation code},
  booktitle = 	 {32nd {AIAA} Fluid Dynamics Conference and Exhibit},
  series =	 {{AIAA} Paper 2002-3292},
  address = {St. Louis, {MO}},
  year =	 {2002},
  organization={American Institute of Aeronautics and Astronautics}
}

@InProceedings{Eliasson_EDGE:2001,
  author = 	 {P. Eliasson},
  title = 	 {{EDGE}, a {N}avier-{S}tokes solver, for unstructured grids},
  publisher =    {Technical Report {FOI-R-0298-SE}},
  month = {December},
  year =	 2001,
  organization={Swedish Defence Research Agency}
}

@Article{fezoui_stoufflet:JCP1989,
  author = 	 {L. Fezoui and B. Stouffle},
  title = 	 {A class of implicit upwind schemes for {E}uler simulations with unstructured meshes},
  journal = {Journal of Computational Physics},
  year = 1989,
  volume =	 84,
  issue = 1,
  pages = {174-206}
}

@InProceedings{GaoHabashiFossatiIsolaBaruzzi_AIAA2017-0085,
     author = {S. Gao and W. {G}. Habashi and M. Fossati and D. Isola and G. {S}. Baruzzi},
  title = 	 {Finite-element formulation of a {J}acobian-free solver for supersonic viscous flows on hybrid grids},
  booktitle = 	 {55th {AIAA} Aerospace Sciences Meeting},
  series =	 {{AIAA} Paper 2017-0085},
  address = {Grapevine, Texas },
  year =	 {2017},
  organization={American Institute of Aeronautics and Astronautics}
  }

@Article{Katz_Sankaran_JCP:2011,
  author = 	 {A. Katz and V. Sankaran},
  title = 	 {Mesh quality effects on the accuracy of {CFD} solutions on unstructured meshes}, 
  journal = {Journal of Computational Physics},
  year = 	 2011,
  volume =	 230,
  pages =	 {7670-7686},
  doi = {10.1016/j.jcp.2011.06.023}
}

@Article{katz_sankaran:JSC_DOI,
  author = 	 {A. Katz and V. Sankaran},
  title = 	 {An efficient correction method to obtain a formally third-order accurate flow solver for node-centered unstructured grids},
  journal = {Journal of Scientific Computing},
    year = 	 2012,
  volume =	 51,
  pages =	 {375-393},
  issue = 2,
  doi = {10.1007/s10915-011-9515-1}
  }

@InProceedings{diskin_thomas:AIAA2012-0609,
     author = {B. Diskin and J. {L}. Thomas},
      title = {Effects of mesh regularity on accuracy of finite-volume schemes},
  booktitle = {50th AIAA Aerospace Sciences Meeting},
     series = {{AIAA} Paper 2012-0609},
    address = {Nashville, Tennessee},
       year = 2012,
  organization={American Institute of Aeronautics and Astronautics}
}

@InProceedings{liu_nishikawa_aiaa2016-3969,
     author = {Y. Liu and H. Nishikawa},
  title = 	 {Third-order inviscid and second-order hyperbolic {N}avier-{S}tokes solvers for three-dimensional inviscid and viscous flows},
  booktitle = 	 {46th {AIAA} Fluid Dynamics Conference},
  series =	 {{AIAA} Paper 2016-3969},
  address = {Washington, D.C. },
  year =	 {2016},
  organization={American Institute of Aeronautics and Astronautics}
  }

@InProceedings{liu_nishikawa_aiaa2017-0081,
     author = {H. Nishikawa and Y. Liu},
  title = 	 {Hyperbolic {N}avier-{S}tokes method for high-{R}eynolds-number boundary-layer flows},
  booktitle = 	 {55th {AIAA} Aerospace Sciences Meeting},
  series =	 {{AIAA} Paper 2017-0081},
  address = {Grapevine, Texas },
  year =	 {2017},
  organization={American Institute of Aeronautics and Astronautics}
  }

@Article{nishikawa_boundary_quadrature:JCP2015,
  author = 	 {H. Nishikawa},
  title = 	 {Accuracy-preserving boundary flux quadrature for finite-volume discretization on unstructured grids},
  journal = {Journal of Computational Physics},
     year = 2015,
   volume = {281},
    pages = {518-555}
}

@Article{nishikawa_liu_source_quadrature:jcp2017,
  author = 	 {H. Nishikawa and Y. Liu},
  title  = 	 {Accuracy-preserving source term quadrature for third-order edge-based discretization},
   volume = {344},
    pages = {595-622},  
    journal = {Journal of Computational Physics},
     year = 2017
}

@Article{Nishikawa:ijnmf2024_inreview,
  author =  {H. Nishikawa},
  title  =  {An efficient implementation of edge-based discretization without forming dual control volumes},
   volume = {7},
number ={11},
    pages = {1-22},
    journal = {Advances in Aerodynamics},
     year = 2025,
}

@article{johnsen2012preventing,
  title={Preventing numerical errors generated by interface-capturing schemes in compressible multi-material flows},
  author={Johnsen, E. and Ham, F.},
  journal={Journal of Computational Physics},
  volume={231},
  number={17},
  pages={5705--5717},
  year={2012},
  publisher={Elsevier}
}

@article{abgrall2001computations,
  title={Computations of compressible multifluids},
  author={Abgrall, R. and Karni, S.},
  journal={Journal of Computational Physics},
  volume={169},
  number={2},
  pages={594--623},
  year={2001},
  publisher={Elsevier}
}

@article{part1994conservative,
  title={Conservative and non-conservative interpolation between overlapping grids for finite volume solutions of hyperbolic problems},
  author={P{\"a}rt-Enander, E. and Sj{\"o}green, B.},
  journal={Computers \& Fluids},
  volume={23},
  number={3},
  pages={551--574},
  year={1994},
  publisher={Elsevier}
}

@article{gaburro2024discontinuous,
  title={Discontinuous {Galerkin} schemes for hyperbolic systems in non-conservative variables: quasi-conservative formulation with subcell finite volume corrections},
  author={E. Gaburro and W. Boscheri and S. Chiocchetti and M. Ricchiuto},
  journal={Computer Methods in Applied Mechanics and Engineering},
  volume={431},
  pages={117311},
  year={2024},
  publisher={Elsevier}
}

@article{lv2014discontinuous,
  title={Discontinuous {Galerkin} method for multicomponent chemically reacting flows and combustion},
  author={Lv, Y. and Ihme, M.},
  journal={Journal of Computational Physics},
  volume={270},
  pages={105--137},
  year={2014},
  publisher={Elsevier}
}

@article{peyvan2022oscillation,
  title={Oscillation-free nodal discontinuous spectral element method for the simulation of compressible multicomponent flows},
  author={Peyvan, A. and Li, D. and Komperda, J. and Mashayek, F.},
  journal={Journal of Computational Physics},
  volume={452},
  pages={110921},
  year={2022},
  publisher={Elsevier}
}

@misc{barth1992aspects,
  title={Aspects of unstructured grids and finite-volume solvers for the {Euler} and {Navier}-{Stokes} equations},
  author={Barth, T. J.},
  publisher={AGARD, special course on unstructured grid methods for advection dominated flows},
  year={1992},
  howpublished={Report}
}

@article{coxeter1967geometry,
  title={Geometry revisited},
  author={Coxeter, H. S. M.},
  journal={The Mathematical Association of America},
  year={1967}
}

@article{klein2020insphere,
    author = {Klein, P. P.},
    title ={The insphere of a tetrahedron},
    journal = {Applied Mathematics},
    year = {2020},
    volume = {11},
    pages={601--612}
}

@article{voronoi1908nouvellesA,
  title={Nouvelles applications des param{\`e}tres continus {\`a} la th{\'e}orie des formes quadratiques. Premier m{\'e}moire. Sur quelques propri{\'e}t{\'e}s des formes quadratiques positives parfaites},
  author={Voronoi, G.},
  journal={Journal f{\"u}r die reine und angewandte Mathematik (Crelles Journal)},
  volume={1908},
  number={133},
  pages={97--102},
  year={1908},
  publisher={De Gruyter Berlin, New York}
}

@article{voronoi1908nouvellesB,
  title={Nouvelles applications des param{\`e}tres continus {\`a} la th{\'e}orie des formes quadratiques. Deuxi{\`e}me m{\'e}moire. Recherches sur les parall{\'e}llo{\`e}dres primitifs.},
  author={Voronoi, G.},
  journal={Journal f{\"u}r die reine und angewandte Mathematik (Crelles Journal)},
  volume={1908},
  number={134},
  pages={198--287},
  year={1908},
  publisher={De Gruyter Berlin, New York}
}

@article{aurenhammer1987power,
  title={Power diagrams: Properties, algorithms and applications},
  author={Aurenhammer, F.},
  journal={SIAM Journal on Computing},
  volume={16},
  number={1},
  pages={78--96},
  year={1987},
  publisher={SIAM}
}

@book{boissonnat2018geometric,
  title={Geometric and topological inference},
  author={Boissonnat, J.-D. and Chazal, F. and Yvinec, M.},
  volume={57},
  year={2018},
  publisher={Cambridge University Press}
}

@misc{blaschke1929vorlesungen,
  title={Vorlesungen {\"u}ber differentialgeometrie {III}, die grundlehren der mathematischen wissentschaften},
  author={Blaschke, W.},
  year={1929},
  publisher={Berlin: Springer-Verlag},
  howpublished={}
}

@inproceedings{fortune1986sweepline,
  title={A sweepline algorithm for {Voronoi} diagrams},
  author={Fortune, S.},
  booktitle={Proceedings of the Second Annual Symposium on Computational Geometry},
  pages={313--322},
  year={1986},
  organization={}
}

@article{bowyer1981computing,
  title={Computing {Dirichlet} tessellations},
  author={Bowyer, A.},
  journal={The Computer Journal},
  volume={24},
  number={2},
  pages={162--166},
  year={1981},
  publisher={Oxford University Press}
}

@article{watson1981computing,
  title={Computing the n-dimensional {Delaunay} tessellation with application to {Voronoi} polytopes},
  author={Watson, D. F.},
  journal={The Computer Journal},
  volume={24},
  number={2},
  pages={167--172},
  year={1981},
  publisher={Oxford University Press}
}

@article{lohner1988generation,
  title={Generation of three-dimensional unstructured grids by the advancing-front method},
  author={L{\"o}hner, R. and Parikh, P.},
  journal={International Journal for Numerical Methods in Fluids},
  volume={8},
  number={10},
  pages={1135--1149},
  year={1988},
  publisher={Wiley Online Library}
}

@article{george1994advancing,
  title={The advancing-front mesh generation method revisited},
  author={George, P. L. and Seveno, E.},
  journal={International Journal for Numerical Methods in Engineering},
  volume={37},
  number={21},
  pages={3605--3619},
  year={1994},
  publisher={Wiley Online Library}
}

@article{mavriplis1995advancing,
  title={An advancing front {Delaunay} triangulation algorithm designed for robustness},
  author={Mavriplis, D. J.},
  journal={Journal of Computational Physics},
  volume={117},
  number={1},
  pages={90--101},
  year={1995},
  publisher={Elsevier}
}

@article{frey19983d,
  title={{3D} {Delaunay} mesh generation coupled with an advancing-front approach},
  author={Frey, P. J. and Borouchaki, H. and George, P.-L.},
  journal={Computer Methods in Applied Mechanics and Engineering},
  volume={157},
  number={1-2},
  pages={115--131},
  year={1998},
  publisher={Elsevier}
}

@article{neumuller2011refinement,
  title={Refinement of flexible space--time finite element meshes and discontinuous {Galerkin} methods},
  author={Neum{\"u}ller, M. and Steinbach, O.},
  journal={Computing and Visualization in Science},
  volume={14},
  number={5},
  pages={189--205},
  year={2011},
  publisher={Springer}
}

@article{belda2023conformal,
  title={Conformal marked bisection for local refinement of n-dimensional unstructured simplicial meshes},
  author={Belda-Ferr{\'\i}n, G. and Ruiz-Giron{\'e}s, E. and Gargallo-Peir{\'o}, A. and Roca, X.},
  journal={Computer-Aided Design},
  volume={154},
  pages={103419},
  year={2023},
  publisher={Elsevier}
}

@phdthesis{caplan2019four,
  title={Four-dimensional anisotropic mesh adaptation for spacetime numerical simulations},
  author={Caplan, P. C. D.},
  year={2019},
  school={Massachusetts Institute of Technology},
 address={}
}

@article{caplan2020four,
  title={Four-dimensional anisotropic mesh adaptation},
  author={Caplan, P. C. and Haimes, R. and Darmofal, D. L. and Galbraith, M. C.},
  journal={Computer-Aided Design},
  volume={129},
  pages={102915},
  year={2020},
  publisher={Elsevier}
}

@article{moller1995advancing,
  title={On advancing front mesh generation in three dimensions},
  author={M{\"o}ller, P. and Hansbo, P.},
  journal={International Journal for Numerical Methods in Engineering},
  volume={38},
  number={21},
  pages={3551--3569},
  year={1995},
  publisher={Wiley Online Library}
}

@article{lo2013dynamic,
  title={Dynamic grid for mesh generation by the advancing front method},
  author={Lo, S. H.},
  journal={Computers \& Structures},
  volume={123},
  pages={15--27},
  year={2013},
  publisher={Elsevier}
}

@article{chan1997automatic,
  title={An automatic tetrahedral mesh generation scheme by the advancing front method},
  author={Chan, C. T. and Anastasiou, K.},
  journal={Communications in Numerical Methods in Engineering},
  volume={13},
  number={1},
  pages={33--46},
  year={1997},
  publisher={Wiley Online Library}
}

@article{coxeter1934discrete,
  title={Discrete groups generated by reflections},
  author={Coxeter, H. S. M.},
  journal={Annals of Mathematics},
  volume={35},
  number={3},
  pages={588--621},
  year={1934},
  publisher={JSTOR}
}

@article{freudenthal1942simplizialzerlegungen,
  title={Simplizialzerlegungen von beschr{\"a}nkter Flachheit},
  author={Freudenthal, H.},
  journal={Annals of Mathematics},
  volume={43},
  number={3},
  pages={580--582},
  year={1942},
  publisher={JSTOR}
}

@article{geuzaine2009gmsh,
  title={Gmsh: a three-dimensional finite element mesh generator},
  author={Geuzaine, Christophe and Remacle, Jean-Francois and Dular, P},
  journal={International Journal for Numerical Methods in Engineering},
  volume={79},
  number={11},
  pages={1309--1331},
  year={2009}
}

@article{stein1966note,
  title={A note on the volume of a simplex},
  author={Stein, P},
  journal={The American Mathematical Monthly},
  volume={73},
  number={3},
  pages={299--301},
  year={1966},
  publisher={JSTOR}
}

@article{anderson2011petrov,
  title={Petrov--{Galerkin} and discontinuous-{Galerkin} methods for time-domain and frequency-domain electromagnetic simulations},
  author={Anderson, W Kyle and Wang, Li and Kapadia, Sagar and Tanis, Craig and Hilbert, Bruce},
  journal={Journal of Computational Physics},
  volume={230},
  number={23},
  pages={8360--8385},
  year={2011},
  publisher={Elsevier}
}

@inproceedings{glasby2013comparison,
  title={Comparison of {SU/PG} and {DG} finite-element techniques for the compressible {Navier}-{Stokes} equations on anisotropic unstructured meshes},
  author={Glasby, Ryan and Burgess, Nicholas and Anderson, Kyle and Wang, Li and Allmaras, Steven and Mavriplis, Dimitri},
  booktitle={51st AIAA Aerospace Sciences Meeting including the New Horizons Forum and Aerospace Exposition},
  pages={691},
  year={2013},
  organization={AIAA}
}

% \bmsection*{Supporting information}

% Additional supporting information may be found in the
% online version of the article at the publisher’s website.

%\appendix

\end{document}